\theoremstyle{plain}
\newtheorem{thm}{Theorem}[]
\newtheorem{cor}[thm]{Corollary} 
\newtheorem{prp}[thm]{Proposition} 
\newtheorem{lem}[thm]{Lemma} 
\theoremstyle{definition}
\newtheorem{dfn}[thm]{Definition}
\theoremstyle{remark} 
\newtheorem{rmk}[thm]{Remark}
\newtheorem{ntn}[thm]{Notation}
\newcommand{\NN}{\mathbb{N}}
\newcommand{\ZZ}{\mathbb{Z}}
\newcommand{\E}{\mathcal{E}} % {\mathfrak{e}}
\newcommand{\F}{\mathcal{F}} % {\mathfrak{f}}
\newcommand{\K}{\mathcal{K}}
\newcommand{\V}{\mathfrak{V}}
\newcommand{\GSI}{\mathfrak{G}}
\newcommand{\ee}{\mathbf{e}}
\newcommand{\one}{\mathbf{1}}
\newcommand{\ti}{\mathbf{t}}
\newcommand{\mm}{\mathfrak{m}}
\newcommand{\nn}{\mathfrak{n}}
\newcommand{\ol}{\overline}
\newcommand{\wh}{\widehat}
\DeclareMathOperator{\dist}{d}
\DeclareMathOperator{\LL}{L}
\DeclareMathOperator{\PP}{P}
\numberwithin{equation}{section}
\begin{document}
%%%%%%%%%%%%%%%%%%%%%%%%%%%%%%%%%%%%%%%%%%%%%%%%%%%%%%%%%%%%%%%%%%%%%%%%%%%%%%%

\title{Poincar\'e series on good semigroup ideals}

\author[L.~Tozzo]{Laura Tozzo}
\address{L.~Tozzo\\
Department of Mathematics\\
University of Kaiserslautern\\
67663 Kaiserslautern\\
Germany}
\email{\href{mailto:tozzo@mathematik.uni-kl.de}{tozzo@mathematik.uni-kl.de}}

%\date{\today}

\subjclass[2010]{Primary 05E05; Secondary 14H20,06F05}
\keywords{value semigroup, good semigroup, Poincar\'e series, symmetry}

%%%%%%%%%%%%%%%%%%%%%%%%%%%%%%%%%%%%%%%%%%%%%%%%%%%%%%%%%%%%%%%%%%%%%%%

\begin{abstract}
The Poincar\'e series of a ring associated to a plane curve was defined by Campillo, Delgado, and Gusein-Zade.
This series, defined through the value semigroup of the curve, encodes the topological information of the curve.
In this paper we extend the definition of Poincar\'e series to the class of good semigroup ideals, to which value semigroups of curves belong.
Using this definition we generalize a result of Pol: under suitable assumptions, given good semigroup ideals $E$ and $K$, with $K$ canonical, the Poincar\'e series of $K-E$ is symmetric to the Poincar\'e series of $E$.
\end{abstract}

%%%%%%%%%%%%%%%%%%%%%%%%%%%%%%%%%%%%%%%%%%%%%%%%%%%%%%%%%%%%%%%%%%%%%%%

\maketitle
%\tableofcontents

%%%%%%%%%%%%%%%%%%%%%%%%%%%%%%%%%%%%%%%%%%%%%%%%%%%%%%%%%%%%%%%%%%%%%%%
\section{Introduction}
%%%%%%%%%%%%%%%%%%%%%%%%%%%%%%%%%%%%%%%%%%%%%%%%%%%%%%%%%%%%%%%%%%%%%%%

Plane algebroid curves are determined by their value semigroups up to equivalence in the sense of Zariski, as shown by Waldi \cite{Wal72,Wal00}.
Value semigroups are important invariants of curves also with regard to duality properties.
Kunz \cite{Kun70} was the first to show that the Gorensteinness of an analytically irreducible and residually rational local ring corresponds to a symmetry of its numerical value semigroup.
Waldi \cite{Wal72} gave a definition of symmetry for more branches, and showed that plane (hence Gorenstein) curves with two branches have symmetric value semigroups.
Later Delgado \cite{Del87} proved the analogue of Kunz' result for general algebroid curves: they are Gorenstein if and only if their value semigroup is symmetric. 
Campillo, Delgado and Kiyek \cite{CDK94} extended Delgado result to analytically reduced and residually rational local rings with infinite residue field.
D'Anna \cite{D'A97} then used the definition of symmetry given by Delgado to define a canonical semigroup ideal $K_0$, and showed that a fractional ideal $\K$ of $R$ such that $R\subseteq \K\subseteq \ol R$ is canonical if and only if its value semigroup coincides with $K_0$.
Recently Pol \cite{Pol16} studied the value semigroup ideal of the dual of a fractional ideal over Gorenstein algebroid curves.
In \cite{DSG} the author together with Korell and Schulze gave a new definition of a canonical semigroup ideal $K$ (see Definition \ref{28}) and extended D'Anna's and Pol's results to the larger class of admissible rings (see Definition \ref{88}).
Moreover, one of the main results of \cite{DSG} shows that Cohen--Macaulay duality and semigroup duality are compatible under taking values, if the ring is admissible.
An admissible ring is in particular semilocal, and its value semigroup, as first observed by Barucci, D'Anna and Fr\"oberg \cite{BDF00}, satisfies particular axioms which define the class of good semigroups.

In this paper we analyze further the duality properties of good semigroups by showing symmetry properties of their Poincar\'e series.
In \cite{Sta77}, the author showed that Gorenstein semigroup rings have symmetric Hilbert series.
This is also equivalent to the value semigroup associated to the semigroup ring being symmetric.
Adapting the concept of Hilbert series to value semigroups leads to the concept of Poincar\'e series.
A definition of Poincar\'e series for a plane curve singularity was given by Campillo, Delgado and Gusein-Zade in \cite{CDG03}, where they showed that it coincides with the Alexander polynomial, a complete topological invariant of the singularity.
Moyano-Fernandez in \cite{MF15}, using a definition inspired by the above, analyzed the connection between univariate and multivariate Poincar\'e series of curve singularities and later on, together with Tenorio and Torres \cite{MFTT17}, they showed that the Poincar\'e series associated to generalized Weierstrass semigroups can be used to retrieve entirely the semigroup, hence highlighting the potential of Poincar\'e series.
Later Pol \cite[\S 5.2.8 ]{Pol16} considered a symmetry problem on Gorenstein reduced curves. 
She proved that the Poincar\'e series of the Cohen--Macaulay dual of a fractional ideal $\E$ is symmetric to the Poincar\'e series of $\E$, therefore generalizing Stanley's result to fractional ideals of Gorenstein rings.
Pol's result strongly uses the fact that it is always possible to define a filtration on value semigroups (see Definition \ref{01}), as done first in \cite{CDK94}.
To deal with this filtration an important tool is the distance $\dist(E\backslash F)$ between two good semigroup ideals $E\subseteq F$ (see Definition~\ref{06}).
Using the notion of distance and the duality on good semigroups given in \cite{DSG}, we are able to generalize Pol's result to good semigroup ideals.
We prove that, given good semigroup ideals $E$ and $K$, with $K$ canonical, the Poincar\'e series of $K-E$ is symmetric to the Poincar\'e series of $E$ under suitable assumptions.
In particular, the symmetry is true (without additional assumptions) whenever $E$ is the value semigroup of a fractional ideal $\E$ of an admissible ring $R$.

%%%%%%%%%%%%%%%%%%%%%%%%%%%%%%%%%%%%%%%%%%%%%%%%%%%%%%%%%%%%%%%%%%%%%%%
\section{Preliminaries}
%%%%%%%%%%%%%%%%%%%%%%%%%%%%%%%%%%%%%%%%%%%%%%%%%%%%%%%%%%%%%%%%%%%%%%%

In this section we recall definitions and known results that will be needed in the rest of the paper.

Let $S\subseteq \ol S$ be a partially ordered cancellative commutative monoid, where $\ol{S}$ is a partially ordered monoid, isomorphic to $\NN^s$ with its natural partial order.
Then the group of differences $D_S$ of $\ol S$ is isomorphic to $\ZZ^s$.
In the following we always fix an isomorphism $D_S\cong\ZZ^s$, in order to talk about indexes $i\in\{1,\dots,s\}$.

%%%%%%%%%%%%%%%%%%%%%%%%%%%%%%%%%%%%%%%%%%%%%%%%%%%%%%%%%%%%%%%%%%%%%%%
\subsection{Good semigroups and their ideals}
%%%%%%%%%%%%%%%%%%%%%%%%%%%%%%%%%%%%%%%%%%%%%%%%%%%%%%%%%%%%%%%%%%%%%%%

The following where first defined in \cite[\S1]{Del88} and \cite[\S2]{D'A97}.

%%%%%%%%%%%%%%%%%%%%%%%%%%%%%%%%%%%%%%%%%%%%%%%%%%%%%%%%%%%%%%%%%%%%%%%
\begin{dfn}
Let $E\subseteq \ZZ^s$.
We define properties:
\begin{enumerate}[label={(E\arabic*)}, ref=E\arabic*]
\setcounter{enumi}{-1}
\item\label{E0} There exists an $\alpha\in \ZZ^s$ such that $\alpha+\NN^s\subseteq E$.
\item\label{E1} If $\alpha,\beta \in E$, then $\min\{\alpha,\beta\} := (\min\{\alpha_i,\beta_i\})_{i\in I}\in E$.
\item\label{E2} For any $\alpha,\beta\in E$ and $j\in I$ with $\alpha_j=\beta_j$ there exists an $\epsilon\in E$ such that $\epsilon_j>\alpha_j=\beta_j$ and $\epsilon_i\ge\min\{\alpha_i,\beta_i\}$ for all $i\in I\setminus\{j\}$ with equality if $\alpha_i\not=\beta_i$.
\end{enumerate}
\end{dfn}
%%%%%%%%%%%%%%%%%%%%%%%%%%%%%%%%%%%%%%%%%%%%%%%%%%%%%%%%%%%%%%%%%%%%%%%

\begin{dfn}
We call $S$ a \emph{good semigroup} if properties \eqref{E0}, \eqref{E1} and \eqref{E2} hold for $E=S$.

A \emph{semigroup ideal} of a good semigroup $S$ is a subset $\emptyset\ne E \subseteq D_S$ such that $E+S\subseteq E$ and $\alpha+E\subseteq S$ for some $\alpha\in S$. 

If $E$ satisfies \eqref{E1}, we denote by $\mu^E:=\min E$ its \emph{minimum}.

If $E$ satisfies \eqref{E1} and \eqref{E2}, then we call $E$ a \emph{good semigroup ideal} of $S$.
Note that any semigroup ideal of a good semigroup $S$ automatically satisfies \eqref{E0}.

If $E$ and $F$ are semigroup ideals of a good semigroup $S$, we define
\[
E-F:=\{\alpha\in D_S\mid \alpha+F\subseteq E\},
\]
and we call 
\[
C_E:=E-\ol S=\{\alpha\in D_S\mid \alpha+\ol S\subseteq E\}
\]
the \emph{conductor ideal} of $E$. 
If $E$ is a semigroup ideal of $S$ satisfying \eqref{E1}, then we call
$\gamma^E:=\mu^{C_E}$ the \emph{conductor} of $E$.
We abbreviate $\gamma:=\gamma^S$ and $\tau:=\gamma-\one$, where $\one=(1,\dots,1)\in\NN^s$.
\end{dfn}

%%%%%%%%%%%%%%%%%%%%%%%%%%%%%%%%%%%%%%%%%%%%%%%%%%%%%%%%%%%%%%%%%%%%%%%

\begin{ntn}
Let $S$ be a good semigroup.
The set of good semigroup ideals of $S$ is denoted by $\GSI_S$.
\end{ntn}

%%%%%%%%%%%%%%%%%%%%%%%%%%%%%%%%%%%%%%%%%%%%%%%%%%%%%%%%%%%%%%%%%%%%%%%

\begin{rmk}\label{21}
Let $S$ be a good semigroup.
For any $E,F\in \GSI_S$ and $\alpha\in D_S$ the following hold:
\begin{enumerate}[label=(\alph*), ref=\alph*]
\item $\alpha+E\in\GSI_S$.
\item $(\alpha+E)-F=\alpha+(E-F)$ and $E-(\alpha+F)=-\alpha+(E-F)$.
\item $E-S=E$.
\end{enumerate}
\end{rmk}

%%%%%%%%%%%%%%%%%%%%%%%%%%%%%%%%%%%%%%%%%%%%%%%%%%%%%%%%%%%%%%%%%%%%%%%

\begin{dfn}\label{01}
Let $S$ be a good semigroup.
For any $E\in\GSI_S$, we define a decreasing filtration $E^\bullet$ on $E$ by semigroup ideals
\[
E^\alpha:=\{\beta\in E\mid \beta\ge\alpha\}
\]
for any $\alpha\in D_S$.
\end{dfn}

%%%%%%%%%%%%%%%%%%%%%%%%%%%%%%%%%%%%%%%%%%%%%%%%%%%%%%%%%%%%%%%%%%%%%%%

\begin{rmk}
Let $S$ be a good semigroup.
For a semigroup ideal $E\in \GSI_S$ we have $E=E^{\mu^E}$ and, by definition of conductor, $C_E=\gamma^E+\ol S=E^{\gamma^E}$.
\end{rmk}

%%%%%%%%%%%%%%%%%%%%%%%%%%%%%%%%%%%%%%%%%%%%%%%%%%%%%%%%%%%%%%%%%%%%%%%
\subsection{Distance of semigroup ideals}
%%%%%%%%%%%%%%%%%%%%%%%%%%%%%%%%%%%%%%%%%%%%%%%%%%%%%%%%%%%%%%%%%%%%%%%

\begin{dfn}\label{06}
Let $E\subseteq D_S$. 
Elements $\alpha,\beta\in E$ with $\alpha<\beta$ are called \emph{consecutive} in $E$ if $\alpha<\delta<\beta$ implies $\delta\not\in E$ for any $\delta\in D_S$.
For $\alpha,\beta\in E$, a chain 
\begin{equation}\label{20}
\alpha=\alpha^{(0)}<\cdots<\alpha^{(n)}=\beta
\end{equation}
of points $\alpha^{(i)}\in E$ is said to be \emph{saturated of length n} if $\alpha^{(i)}$ and $\alpha^{(i+1)}$ are consecutive in $E$ for all $i \in \{0,\ldots, n-1\}$.
If $E$ satisfies
\begin{enumerate}[label={(E\arabic*)}, ref=E\arabic*]
\setcounter{enumi}{3}
\item \label{E4} For fixed $\alpha,\beta\in E$, any two saturated chains \eqref{20} in $E$ have the same length $n$.
\end{enumerate}
then we call $\dist_E(\alpha,\beta):=n$ the \emph{distance} of $\alpha$ and $\beta$ in $E$.
\end{dfn}

Due to \cite[Proposition~2.3]{D'A97}, any $E \in \GSI_S$ satisfies property \eqref{E4}.

%%%%%%%%%%%%%%%%%%%%%%%%%%%%%%%%%%%%%%%%%%%%%%%%%%%%%%%%%%%%%%%%%%%%%%%

\begin{dfn}
Let $S$ be a good semigroup, and let $E \subseteq F$ be two semigroup ideals of $S$ satisfying property \eqref{E4}.
Then we call
\[
\dist(F\backslash E):=d_F(\mu^F,\gamma^E)-\dist_E(\mu^E,\gamma^E)
\]
the \emph{distance} between $E$ and $F$.
\end{dfn}

%%%%%%%%%%%%%%%%%%%%%%%%%%%%%%%%%%%%%%%%%%%%%%%%%%%%%%%%%%%%%%%%%%%%%%%

The following was proved in \cite[Proposition~2.7]{D'A97}:

\begin{lem}\label{07}
If $E\subseteq F\subseteq G$ are semigroup ideals of a good semigroup $S$ satisfying property \eqref{E4}, then
\[
\dist(G\backslash E)=\dist(G\backslash F)+\dist(F\backslash E).
\]
\end{lem}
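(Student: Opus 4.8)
The plan is to unfold both sides into the internal distances $\dist_E$, $\dist_F$, $\dist_G$ that appear in the definition of $\dist(\,\cdot\,\backslash\,\cdot\,)$, and to reduce the whole identity to a single equality between a distance computed in $F$ and the same distance computed in $G$. First I would record the two elementary facts that make the bookkeeping work. From $E\subseteq F\subseteq G$ one gets $C_E\subseteq C_F\subseteq C_G$, hence $\gamma^G\le\gamma^F\le\gamma^E$; since each conductor lies in its ideal and the minima are global lower bounds, the points satisfy $\mu^G\le\gamma^F\le\gamma^E$, all lying in $G$, and $\mu^F\le\gamma^F\le\gamma^E$, all lying in $F$. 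The second fact is internal additivity of the distance inside a single ideal: if $H$ satisfies \eqref{E4} and $\alpha\le\beta\le\gamma$ all lie in $H$, then concatenating a saturated chain from $\alpha$ to $\beta$ with one from $\beta$ to $\gamma$ yields a saturated chain from $\alpha$ to $\gamma$, so \eqref{E4} gives $\dist_H(\alpha,\gamma)=\dist_H(\alpha,\beta)+\dist_H(\beta,\gamma)$.

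Next I would substitute the definitions. Writing $a:=\dist_E(\mu^E,\gamma^E)$, the three terms become $\dist(G\backslash E)=\dist_G(\mu^G,\gamma^E)-a$, $\dist(F\backslash E)=\dist_F(\mu^F,\gamma^E)-a$, and $\dist(G\backslash F)=\dist_G(\mu^G,\gamma^F)-\dist_F(\mu^F,\gamma^F)$, and the term $a$ cancels from the desired identity. Applying internal additivity in $G$ through the intermediate point $\gamma^F$ and in $F$ through $\gamma^F$ then collapses what remains to the single equality $\dist_G(\gamma^F,\gamma^E)=\dist_F(\gamma^F,\gamma^E)$.

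The heart of the argument, and the step I expect to be the main obstacle, is precisely this last equality, which I would establish by showing that $F$ and $G$ coincide on the region above $\gamma^F$. Indeed, by the conductor description we have $\gamma^F+\ol S=C_F=F^{\gamma^F}\subseteq F\subseteq G$; since $\ol S\cong\NN^s$ carries the natural order, the set $\{\beta\in D_S\mid \beta\ge\gamma^F\}$ equals $\gamma^F+\ol S$ and is therefore contained in both $F$ and $G$. Consequently $F^{\gamma^F}=G^{\gamma^F}=\gamma^F+\ol S$. As $\gamma^F\le\gamma^E$, any saturated chain from $\gamma^F$ to $\gamma^E$ in either ideal remains inside this common region, and the consecutiveness of two points $\ge\gamma^F$ is tested against the same set in $F$ and in $G$ (any point strictly between them is itself $\ge\gamma^F$). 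Hence the saturated chains from $\gamma^F$ to $\gamma^E$, and in particular their common length, are identical in $F$ and in $G$, which gives $\dist_F(\gamma^F,\gamma^E)=\dist_G(\gamma^F,\gamma^E)$ and completes the proof. The only place where care is genuinely needed is this final identification of the filtration pieces above $\gamma^F$; the rest is formal cancellation and the standard additivity afforded by \eqref{E4}.
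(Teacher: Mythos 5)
Your proof is correct, but note that there is no internal proof in the paper to compare it with: the lemma is quoted from \cite[Proposition~2.7]{D'A97}, so what you have produced is a self-contained reconstruction of that external result. Your reduction is sound: after substituting the definition of $\dist(\,\cdot\,\backslash\,\cdot\,)$ into both sides, the term $\dist_E(\mu^E,\gamma^E)$ cancels, and the \eqref{E4}-additivity of chain lengths inside $G$ (split at the intermediate point $\gamma^F$, which is legitimate since $\mu^G\le\gamma^F\le\gamma^E$ with all three points lying in $G$) and inside $F$ (split at $\gamma^F$ again) collapses the identity to $\dist_G(\gamma^F,\gamma^E)=\dist_F(\gamma^F,\gamma^E)$. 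Your justification of this key equality is also right: since $\gamma^F=\mu^{C_F}$ and $C_F$ is upward closed, $C_F=\gamma^F+\ol S=\{\beta\in D_S\mid\beta\ge\gamma^F\}$, and this set is contained in $F\subseteq G$, so $F^{\gamma^F}=G^{\gamma^F}$; moreover, any $\delta$ strictly between two points $\ge\gamma^F$ is itself $\ge\gamma^F$, so consecutiveness of such points is the same notion in $F$, in $G$, and indeed in $D_S$, whence saturated chains from $\gamma^F$ to $\gamma^E$ coincide in $F$ and $G$ and have equal length by \eqref{E4}. One minor caveat: your argument (like the statement of the lemma and the paper's definition of $\dist(\,\cdot\,\backslash\,\cdot\,)$ itself) presupposes the existence of the minima $\mu^E,\mu^F,\mu^G$ and conductors $\gamma^E,\gamma^F$, which really requires \eqref{E1}-type closure and not just \eqref{E4}; this is an imprecision inherited from the paper's formulation rather than a gap in your proof.
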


%%%%%%%%%%%%%%%%%%%%%%%%%%%%%%%%%%%%%%%%%%%%%%%%%%%%%%%%%%%%%%%%%%%%%%%

Moreover, as proved by the author in \cite[Proposition 4.2.6]{DSG}, distance can be used to check equality:

\begin{prp}\label{18}
Let $S$ be a good semigroup, and let $E,F\in\GSI_S$ with $E\subseteq F$.
Then $E=F$ if and only if $\dist(F\backslash E)=0$.
\end{prp}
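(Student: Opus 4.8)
The plan is to treat the two implications separately; the forward one is immediate and the reverse one carries all the weight. If $E=F$ then $\mu^E=\mu^F$ and $\gamma^E=\gamma^F$, so the two terms defining the distance coincide and $\dist(F\backslash E)=\dist_F(\mu^F,\gamma^E)-\dist_E(\mu^E,\gamma^E)=0$.

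For the converse I would first record a non-negativity estimate and use it to reduce to a rigid situation. Since $E\subseteq F$ we have $\mu^F\le\mu^E$, and $C_E\subseteq C_F$ gives $\gamma^F\le\gamma^E$; in particular $\mu^F\le\mu^E\le\gamma^E$ is a chain of comparable elements lying in $F$. Concatenating saturated chains and invoking property \eqref{E4} (which holds for every good semigroup ideal) gives the additivity $\dist_F(\mu^F,\gamma^E)=\dist_F(\mu^F,\mu^E)+\dist_F(\mu^E,\gamma^E)$. Moreover every saturated chain in $E$ from $\mu^E$ to $\gamma^E$ is a chain in $F$, hence refines to a saturated $F$-chain of at least the same length, so \eqref{E4} yields $\dist_F(\mu^E,\gamma^E)\ge\dist_E(\mu^E,\gamma^E)$. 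Substituting, I obtain
\[
\dist(F\backslash E)=\dist_F(\mu^F,\mu^E)+\bigl(\dist_F(\mu^E,\gamma^E)-\dist_E(\mu^E,\gamma^E)\bigr),
\]
a sum of two non-negative integers. Hence $\dist(F\backslash E)=0$ forces both summands to vanish: the first gives $\mu^F=\mu^E=:\mu$, and the second gives $\dist_F(\mu,\gamma^E)=\dist_E(\mu,\gamma^E)=:n$.

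It remains to upgrade this rigidity to the equality $E=F$, and this is where I expect the main difficulty. I would first show that $E$ and $F$ have the same elements in the box $[\mu,\gamma^E]=\{\delta\mid\mu\le\delta\le\gamma^E\}$: given $\beta\in F$ with $\mu\le\beta\le\gamma^E$, concatenating saturated $F$-chains $\mu\to\beta$ and $\beta\to\gamma^E$ produces, again by \eqref{E4}, a saturated $F$-chain of length $n$ through $\beta$, and the equality $\dist_F(\mu,\gamma^E)=\dist_E(\mu,\gamma^E)$ should force all its members to lie in $E$. The obstacle is precisely this last step: a priori only one distinguished saturated chain (a refinement of an $E$-chain) is known to lie in $E$, and one must transport that information to an arbitrary saturated $F$-chain of the same length. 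I would resolve it using \eqref{E2}, which permits local moves between saturated chains sharing endpoints, together with the observation that any point of $F$ strictly between two consecutive points of $E$ would refine some maximal chain beyond length $n$, contradicting \eqref{E4}. Finally I would pass from agreement on the box to global equality: property \eqref{E1} shows that box agreement forces $\gamma^F=\gamma^E$ (a strictly smaller $F$-conductor would, after taking $\min$ with $\gamma^E$, place a point in the box exhibiting a smaller $E$-conductor), and then the fact that a good semigroup ideal is determined by its trace on $[\mu,\gamma^E]$ together with its conductor---a consequence of \eqref{E1} and \eqref{E2}---gives $F\subseteq E$. Combined with $E\subseteq F$ this yields $E=F$ and proves the proposition.
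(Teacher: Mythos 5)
Your forward implication and your opening reduction are correct: from $E\subseteq F$ one gets $\mu^F\le\mu^E$ and $\gamma^F\le\gamma^E$, concatenation of saturated chains together with \eqref{E4} gives
\[
\dist(F\backslash E)=\dist_F(\mu^F,\mu^E)+\bigl(\dist_F(\mu^E,\gamma^E)-\dist_E(\mu^E,\gamma^E)\bigr),
\]
and both summands are non-negative (any saturated $E$-chain refines to a saturated $F$-chain), so vanishing distance forces $\mu^F=\mu^E=:\mu$ and $\dist_F(\mu,\gamma^E)=\dist_E(\mu,\gamma^E)$. For the record, the paper itself does not prove this proposition but quotes it from \cite[Proposition~4.2.6]{DSG}, so the only question is whether your argument stands on its own.

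It does not, because the step you yourself flag as the obstacle is genuinely missing, and the mechanism you propose for it fails. What the equality $\dist_F(\mu,\gamma^E)=\dist_E(\mu,\gamma^E)$ gives directly is that every saturated $E$-chain from $\mu$ to $\gamma^E$ remains saturated as an $F$-chain; it does \emph{not} give that an arbitrary saturated $F$-chain through a given $\beta\in F$ has all its members in $E$. Your proposed repair---``any point of $F$ strictly between two consecutive points of $E$ would refine some maximal chain beyond length $n$''---only applies if $\beta$ is comparable to, and squeezed between, two \emph{consecutive} elements of $E$. But if $\delta$ is a maximal element of $\{\eta\in E\mid \eta\le\beta\}$ and $\epsilon$ is the minimum of $\{\eta\in E\mid\eta\ge\beta\}$ (which exists by \eqref{E1}), then $\delta$ and $\epsilon$ need not be consecutive in $E$: there may be elements of $E$ strictly between them that are incomparable to $\beta$, in which case every saturated $E$-chain can route around $\beta$ and its saturation in $F$ is never contradicted by the existence of $\beta$. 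Closing this hole is exactly the nontrivial \eqref{E1}/\eqref{E2} combinatorics that constitutes the cited proof (and D'Anna's Propositions~2.3 and~2.7); appealing to ``local moves permitted by \eqref{E2}'' names the tool but does not carry out the argument. The same criticism applies to your last step: that a good semigroup ideal is determined by its trace on $[\mu^E,\gamma^E]$ together with its conductor is itself a claim requiring an \eqref{E2}-induction, not something you may simply invoke. Note also that you could not fill these gaps with Lemma~\ref{02} or Proposition~\ref{03} of the paper, since their proofs already rely on Proposition~\ref{18}.
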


%%%%%%%%%%%%%%%%%%%%%%%%%%%%%%%%%%%%%%%%%%%%%%%%%%%%%%%%%%%%%%%%%%%%%%%
\subsection{Canonical semigroup ideals}
%%%%%%%%%%%%%%%%%%%%%%%%%%%%%%%%%%%%%%%%%%%%%%%%%%%%%%%%%%%%%%%%%%%%%%%

The following definition is \cite[Definition 5.2.3]{DSG}:

\begin{dfn}\label{28}
Let $S$ be a good semigroup.
A \emph{canonical ideal} $K$ is a good semigroup ideal of $S$ such that $K\subseteq E$ implies $K=E$ for any $E$ with $\gamma^k=\gamma^E$.
\end{dfn}

%%%%%%%%%%%%%%%%%%%%%%%%%%%%%%%%%%%%%%%%%%%%%%%%%%%%%%%%%%%%%%%%%%%%%%%

\begin{ntn}\label{12}
Let $\alpha\in D_S$, $E\subseteq D_S$. 
\begin{itemize}
\item $\Delta^E_i(\alpha)=\{\beta\in E\mid \beta_i=\alpha_i \text{ and } \beta_j>\alpha_j \text{ for all } j\ne i\}$;
\item  $\ol\Delta^E_i(\alpha)=\{\beta\in E\mid \beta_i=\alpha_i \text{ and } \beta_j\ge\alpha_j \text{ for all } j\ne i\}$;
\item $\Delta^E(\alpha)=\cup_{i\in\{1,\dots,s\}}\Delta^E_i(\alpha)$;
\item $\ol\Delta^E(\alpha)=\cup_{i\in\{1,\dots,s\}}\Delta^E_i(\alpha)$.
\end{itemize}
We denote by $\ee_i$ the $i$-th vector of the canonical basis of $D_S$.
Then $\ol\Delta^E_i(\alpha)=\Delta^E_i(\alpha+\ee_i-\textbf{1})$.
\end{ntn}

%%%%%%%%%%%%%%%%%%%%%%%%%%%%%%%%%%%%%%%%%%%%%%%%%%%%%%%%%%%%%%%%%%%%%%%

Using \cite[Proposition 5.2.10]{DSG} and \cite[Proposition 3.2]{D'A97} yields:
 
\begin{prp}\label{30}
Let $S$ be a good semigroup.
Then $K$ is a canonical ideal if and only if $K=\alpha+K_0$ for some $\alpha\in D_S$, where
\[
K_0=\{\alpha\in D_S\mid \Delta^S(\tau-\alpha)=\emptyset\}
\]
is a good semigroup ideal of $S$ called \emph{normalized canonical ideal} of $S$.
In particular, $K_0$ is the only canonical semigroup ideal with $\gamma^{K_0}=\gamma$.
\end{prp}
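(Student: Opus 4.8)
The plan is to reduce everything to a single nontrivial input, namely that the explicit set $K_0$ is the largest good semigroup ideal with conductor $\gamma$, and then to propagate canonicity by translation. The conceptual glue is the observation that the maximality property in Definition \ref{28} is translation-invariant.

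First I would record the translation lemma. For $K\in\GSI_S$ and $\alpha\in D_S$, Remark \ref{21}(a) gives $\alpha+K\in\GSI_S$, and Remark \ref{21}(b) applied to $C_{\alpha+K}=(\alpha+K)-\ol S$ gives $\gamma^{\alpha+K}=\alpha+\gamma^K$. I claim $K$ is canonical if and only if $\alpha+K$ is. Indeed, if $\alpha+K\subseteq F$ with $\gamma^{F}=\gamma^{\alpha+K}$, then subtracting $\alpha$ (again by Remark \ref{21}(b)) yields $K\subseteq -\alpha+F$ with $\gamma^{-\alpha+F}=-\alpha+\gamma^F=\gamma^K$; if $K$ is canonical this forces $K=-\alpha+F$, i.e. $\alpha+K=F$. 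The converse is symmetric. Hence canonicity depends only on $K$ up to translation.

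Next I would invoke D'Anna \cite[Proposition 3.2]{D'A97} to establish that $K_0=\{\alpha\in D_S\mid\Delta^S(\tau-\alpha)=\emptyset\}$ is a good semigroup ideal and is in fact the largest good semigroup ideal with conductor $\gamma$; together with \cite[Proposition 5.2.10]{DSG} this matches the maximality condition of Definition \ref{28}. From maximality, canonicity of $K_0$ is immediate: if $K_0\subseteq E$ with $\gamma^E=\gamma=\gamma^{K_0}$, then $E=K_0$, since no ideal with conductor $\gamma$ properly contains $K_0$. The same fact gives uniqueness: if $K$ is any canonical ideal with $\gamma^K=\gamma$, then $K\subseteq K_0$ because $K_0$ is the largest such ideal, and since $\gamma^K=\gamma^{K_0}$ the canonicity of $K$ forces $K=K_0$. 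This proves the final assertion of the proposition.

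The equivalence then follows formally. For the backward implication, $K_0$ is canonical and the translation lemma shows $\alpha+K_0$ is canonical for every $\alpha\in D_S$. For the forward implication, let $K$ be canonical and set $\alpha:=\gamma^K-\gamma\in D_S$. By the translation lemma $-\alpha+K$ is canonical, and $\gamma^{-\alpha+K}=-\alpha+\gamma^K=\gamma$, so by the uniqueness just proved $-\alpha+K=K_0$, i.e. $K=\alpha+K_0$. The only real content is the cited D'Anna result: that the emptiness condition $\Delta^S(\tau-\alpha)=\emptyset$ cuts out exactly the largest good semigroup ideal with conductor $\gamma$, and in particular that this set satisfies \eqref{E2}. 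Everything after that is the translation bookkeeping above, so I expect the combinatorial analysis of $S$ near its conductor inside \cite[Proposition 3.2]{D'A97} to be the main obstacle.
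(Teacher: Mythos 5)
Your proof is correct, but it takes a genuinely different route from the paper, which in fact offers no argument at all: Proposition \ref{30} is stated there as an immediate consequence of two citations, namely \cite[Proposition 5.2.10]{DSG} (which already carries the characterization of canonical ideals in the sense of Definition \ref{28} as translates of $K_0$) and \cite[Proposition 3.2]{D'A97} (which supplies the fact that $K_0$ is a good semigroup ideal). You instead take as your only nontrivial input D'Anna's maximality statement --- that $K_0\in\GSI_S$, that $\gamma^{K_0}=\gamma$, and that $K_0$ contains every good semigroup ideal with conductor $\gamma$ --- and rederive the whole equivalence from it: your translation lemma (canonicity is preserved under $E\mapsto\alpha+E$, via Remark \ref{21}(a),(b) applied to $C_{\alpha+K}=(\alpha+K)-\ol S=\alpha+C_K$, hence $\gamma^{\alpha+K}=\alpha+\gamma^{K}$) is exactly the bookkeeping needed; maximality then gives both canonicity of $K_0$ and its uniqueness at conductor $\gamma$; and normalizing an arbitrary canonical $K$ by $\alpha:=\gamma^K-\gamma$ gives the forward implication. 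In effect you have reconstructed the proof of \cite[Proposition 5.2.10]{DSG} rather than quoted it, which buys self-containedness at the cost of resting everything on the precise strength of \cite[Proposition 3.2]{D'A97}: your argument stands or falls with the claim that $K_0$ is the \emph{largest} good semigroup ideal with conductor $\gamma$, not merely a good one. That attribution is accurate (it is how D'Anna's result is invoked in \cite{DSG} and elsewhere), so there is no gap; but since it is the single load-bearing external fact, in a written version it should be isolated as a displayed lemma rather than mentioned in passing, and the vague clause ``together with \cite[Proposition 5.2.10]{DSG} this matches the maximality condition'' should be dropped --- as your own argument shows, you do not need that citation at all.
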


%%%%%%%%%%%%%%%%%%%%%%%%%%%%%%%%%%%%%%%%%%%%%%%%%%%%%%%%%%%%%%%%%%%%%%%

\begin{lem}\label{14}
Let $S$ be a good semigroup.
If $E\in \GSI_S$, then 
\begin{enumerate}[label=(\alph*), ref=\alph*]
\item\label{14a} $K_0-E = \{\alpha \in D_S \mid \Delta^E(\tau-\alpha)=\emptyset\}\in \GSI_S$;
\item\label{14b} $\gamma^{K_0-E}=\gamma-\mu^E$;
\item\label{14c} $\mu^{K_0-E}=\gamma-\gamma^E$.
\end{enumerate}
\end{lem}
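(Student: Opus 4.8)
The plan is to establish the explicit description in \eqref{14a} first, and then read off \eqref{14b} and \eqref{14c} from it together with the elementary observation that $\Delta^E(\delta)\neq\emptyset$ whenever $\delta_i\geq\gamma^E_i$ for some $i$ (take $\beta_i=\delta_i$ and $\beta_j=\max\{\delta_j+1,\gamma^E_j\}$; then $\beta\in\gamma^E+\NN^s\subseteq E$ lies in $\Delta^E_i(\delta)$). That $K_0-E\in\GSI_S$ I would take from the duality theory of \cite{DSG}, according to which the difference of two good semigroup ideals is again a good semigroup ideal; alternatively \eqref{E1} is immediate from the $\Delta$-description, since if $\Delta^E(\delta)=\Delta^E(\delta')=\emptyset$ then any element of $\Delta^E_i(\max\{\delta,\delta'\})$ would already lie in $\Delta^E_i(\delta)$ or $\Delta^E_i(\delta')$.

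For the set equality in \eqref{14a}, I would unravel the definitions: $\alpha\in K_0-E$ means $\alpha+\beta\in K_0$ for every $\beta\in E$, i.e.\ $\Delta^S(\tau-\alpha-\beta)=\emptyset$ for every $\beta\in E$. Writing $\theta:=\tau-\alpha$, the task is to show this is equivalent to $\Delta^E(\theta)=\emptyset$. For one implication I argue by contraposition: if $\beta\in\Delta^E_i(\theta)$ then $(\theta-\beta)_i=0$ while $(\theta-\beta)_j<0$ for $j\neq i$, so $\mathbf{0}\in\Delta^S_i(\theta-\beta)$ and hence $\alpha+\beta\notin K_0$. Conversely, if some $\beta\in E$ has $\Delta^S(\theta-\beta)\neq\emptyset$, I pick $\sigma\in S$ with $\sigma_k=(\theta-\beta)_k$ and $\sigma_l>(\theta-\beta)_l$ for $l\neq k$; then $\beta+\sigma\in E$ (because $E+S\subseteq E$) and a direct computation places $\beta+\sigma$ in $\Delta^E_k(\theta)$. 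Both directions only use $\mathbf{0}\in S$ and $E+S\subseteq E$.

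For \eqref{14b} I would compute $C_{K_0-E}$ directly. Using \eqref{14a}, $\alpha\in C_{K_0-E}$ iff $\Delta^E(\delta)=\emptyset$ for all $\delta\leq\tau-\alpha$. Since $\mu^E=\min E$, every $\beta\in E$ satisfies $\beta\geq\mu^E$, so $\Delta^E(\delta)=\emptyset$ as soon as $\delta\leq\mu^E-\one$, while conversely if $(\tau-\alpha)_i\geq\mu^E_i$ for some $i$ one gets $\mu^E\in\Delta^E_i(\delta)$ for a suitable $\delta\leq\tau-\alpha$ with $\delta_i=\mu^E_i$ and the remaining coordinates chosen very negative. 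Hence $C_{K_0-E}=\{\alpha\mid\tau-\alpha\leq\mu^E-\one\}=(\gamma-\mu^E)+\ol S$, giving $\gamma^{K_0-E}=\gamma-\mu^E$.

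For \eqref{14c} the lower bound uses the same estimate: $\Delta^E(\theta)=\emptyset$ forces $\theta\leq\gamma^E-\one$, so every $\alpha\in K_0-E$ satisfies $\alpha\geq\gamma-\gamma^E$. It remains to prove that $\gamma-\gamma^E$ itself lies in $K_0-E$, which by \eqref{14a} is exactly the claim $\Delta^E(\gamma^E-\one)=\emptyset$; \emph{this is the main obstacle}. I expect to argue by contradiction: if $\beta\in\Delta^E_i(\gamma^E-\one)$, then $\min\{\beta,\gamma^E\}$ via \eqref{E1} gives $\gamma^E-\ee_i\in E$, and I would then apply \eqref{E2} to a pair $t,\,t+\ee_i$ (which agree in every coordinate off $i$), forcing a new element of $E$ whose $i$-th coordinate stays pinned at $\gamma^E_i-1$ (by the forced equality in \eqref{E2}) while coordinate $j\neq i$ is pushed strictly higher. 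Iterating this over all $j\neq i$ and taking a final minimum with $\gamma^E+\eta$ via \eqref{E1} yields $(\gamma^E-\ee_i)+\NN^s\subseteq E$, so $\gamma^E-\ee_i\in C_E$, contradicting the minimality of $\gamma^E=\mu^{C_E}$. Once $\Delta^E(\gamma^E-\one)=\emptyset$ is secured, $\mu^{K_0-E}=\gamma-\gamma^E$ follows.
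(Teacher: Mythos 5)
Your proposal is correct, but it takes a genuinely different route from the paper. The paper's proof of Lemma~\ref{14} is essentially citation-based: part (\ref{14a}) is quoted from D'Anna's Computation~3.3 and \cite[Lemma 5.2.9.(b)]{DSG}, part (\ref{14b}) from \cite[Lem.~4.1.13]{DSG}, and part (\ref{14c}) is deduced from (\ref{14b}) together with the duality theorem $K_0-(K_0-E)=E$ (Proposition~\ref{22}), via $\mu^{K_0-E}=\gamma-\gamma^{K_0-(K_0-E)}=\gamma-\gamma^E$. You instead argue directly from the axioms: your two-way contraposition for the set equality in (\ref{14a}) (using only $\mathbf{0}\in S$ and $E+S\subseteq E$) is correct; your computation $C_{K_0-E}=(\gamma-\mu^E)+\ol S$ for (\ref{14b}) is correct; and for (\ref{14c}) you replace the duality theorem by a combinatorial argument: $\beta\in\ol\Delta^E_i(\gamma^E-\one)$ forces $\gamma^E-\ee_i\in E$ by \eqref{E1}, and then \eqref{E2} plus minimality of $\gamma^E$ in $C_E$ gives a contradiction. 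That last step is the most interesting divergence, since it shows (\ref{14b}) and (\ref{14c}) need none of the duality machinery of \cite{DSG}. Your sketch there does need to be tightened into an induction: pairing $\gamma^E-\ee_i+\eta$ with $\gamma^E+\eta$ (for $\eta\in\NN^s$ with $\eta_i=0$), applying \eqref{E2} at an index $j\ne i$ (the forced equality pins coordinate $i$ at $\gamma^E_i-1$), and taking the \eqref{E1}-minimum of the resulting element with $\gamma^E+\eta+\ee_j$ yields $\gamma^E-\ee_i+\eta+\ee_j\in E$; induction on $|\eta|$ then gives $(\gamma^E-\ee_i)+\NN^s\subseteq E$ as you claim. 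What the paper's route buys is brevity; what yours buys is a self-contained proof from \eqref{E0}--\eqref{E2}.

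One caution on the remaining clause $K_0-E\in\GSI_S$: your stated justification, that ``the difference of two good semigroup ideals is again a good semigroup ideal,'' is not what \cite{DSG} proves and should not be invoked as a general principle. Differences satisfy \eqref{E1} in general (your max-argument shows this for the $\Delta$-description), but \eqref{E2} is the delicate point, and its validity for $K_0-E$ is a special feature of the canonical ideal, established in \cite[Lemma 5.2.9.(b)]{DSG} --- the result the paper actually cites. Since that lemma is exactly what you need and you defer to \cite{DSG} anyway, this is a mis-attribution rather than a gap in your argument, but the citation must point to that specific statement rather than to a general closure property.
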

\begin{proof}
For part (a) see \cite[Computation 3.3]{D'A97} and \cite[Lemma 5.2.9.(b)]{DSG}. 
Part (b) is proven in \cite[Lem. 4.1.13]{DSG}.
Part (c) follows by \cite[Theorem 5.2.7(iii)]{DSG}. 
In fact, $\mu^{K_0-E}=\gamma-\gamma^{K_0-(K_0-E)}=\gamma-\gamma^E$.
\end{proof}

%%%%%%%%%%%%%%%%%%%%%%%%%%%%%%%%%%%%%%%%%%%%%%%%%%%%%%%%%%%%%%%%%%%%%%%

In the following, when we talk about \emph{the} canonical semigroup ideal, we refer to $K_0$.
To make notation easier, we will write $K$ instead of $K_0$.
Notice that by Remark \ref{21} and Proposition \ref{30} all the results hold as well for any $K$ canonical, up to translation by a suitable $\alpha$.

%%%%%%%%%%%%%%%%%%%%%%%%%%%%%%%%%%%%%%%%%%%%%%%%%%%%%%%%%%%%%%%%%%%%%%%

\begin{rmk}\label{24}
Let $S$ be a good semigroup, and $E\in\GSI_S$.
For all $\alpha\in D_S$ we have $E-D_S^\alpha=D_S^{\gamma-\alpha}$. 
In fact, Remark \ref{21} implies:
\[
E-D_S^{\alpha}=E-(\alpha+D_S)=-\alpha+(E-D_S)=-\alpha+\gamma+D_S=D_S^{\gamma-\alpha}.
\]
This is in particular true for $E=K$.
\end{rmk}

%%%%%%%%%%%%%%%%%%%%%%%%%%%%%%%%%%%%%%%%%%%%%%%%%%%%%%%%%%%%%%%%%%%%%%%

The following is \cite[Theorem 5.2.6]{DSG}:
\begin{prp}\label{22}
Let $S$ be a good semigroup, $E\in \GSI_S$, and let $K$ be the canonical semigroup ideal. 
Then $K-(K-E)=E$.
\end{prp}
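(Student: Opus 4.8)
The plan is to prove the two inclusions separately, reducing the nontrivial one to a distance computation. The inclusion $E\subseteq K-(K-E)$ holds for purely formal reasons, directly from the definition of the operation $-$: for every $e\in E$ and every $\alpha\in K-E$ one has $\alpha+e\in K$, so $e+(K-E)\subseteq K$ and hence $e\in K-(K-E)$. For the reverse inclusion I would not argue pointwise but instead invoke Proposition~\ref{18}. Since $K-E\in\GSI_S$ by Lemma~\ref{14}\eqref{14a}, applying the same lemma again gives $K-(K-E)\in\GSI_S$; thus $E$ and $K-(K-E)$ are good semigroup ideals with $E\subseteq K-(K-E)$, and it suffices to show $\dist\big((K-(K-E))\backslash E\big)=0$.

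Before doing so I would record the structural input from Lemma~\ref{14}\eqref{14b}--\eqref{14c}. Writing $F:=K-E$ and $G:=K-F=K-(K-E)$, the lemma gives $\gamma^{F}=\gamma-\mu^E$ and $\mu^{F}=\gamma-\gamma^E$, and applying it once more to $F$ yields $\mu^{G}=\gamma-\gamma^{F}=\mu^E$ and $\gamma^{G}=\gamma-\mu^{F}=\gamma^E$. Thus $E$ and $G$ share the same minimum and the same conductor; this is what makes distance $0$ plausible, although by itself it does not force $E=G$, since two good semigroup ideals with equal minimum and conductor may still differ.

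The distance computation I would run through a \emph{duality for distances}: for good semigroup ideals $A\subseteq B$ one has $\dist(B\backslash A)=\dist\big((K-A)\backslash(K-B)\big)$ (note $K-B\subseteq K-A$ by order-reversal of $-$). Granting this, the proof closes by a short telescoping argument. The formal inclusion above, applied to $F$ in place of $E$, gives $F\subseteq K-(K-F)=K-G$; on the other hand $E\subseteq G$ gives $K-G\subseteq K-E=F$. Hence $K-G=F$, and applying the distance duality to the pair $E\subseteq G$ yields
\[
\dist\big(G\backslash E\big)=\dist\big((K-E)\backslash(K-G)\big)=\dist\big(F\backslash F\big)=0,
\]
so Proposition~\ref{18} gives $E=G=K-(K-E)$.

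The main obstacle is precisely the distance duality $\dist(B\backslash A)=\dist((K-A)\backslash(K-B))$, which carries all the real content. I would expect to establish it using the explicit description $K-X=\{\alpha\mid\Delta^X(\tau-\alpha)=\emptyset\}$ of Lemma~\ref{14}\eqref{14a} together with the order-reversing involution $\alpha\mapsto\gamma-\alpha$ on $D_S$ and the additivity of distance (Lemma~\ref{07}), checking that pairs of consecutive points in $B\backslash A$ correspond under this involution to pairs of consecutive points in $(K-A)\backslash(K-B)$. Alternatively, one can bypass the general duality and argue $K-(K-E)=E$ directly from Lemma~\ref{14}\eqref{14a}, proving that $\alpha\in E$ if and only if $\Delta^{K-E}(\tau-\alpha)=\emptyset$; this is the combinatorial heart of the statement and relies essentially on property~\eqref{E2}.
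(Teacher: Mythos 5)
The easy half of your argument is fine: $E\subseteq K-(K-E)$ follows formally from the definition of $-$, and reducing the reverse inclusion to $\dist\bigl((K-(K-E))\backslash E\bigr)=0$ via Lemma~\ref{14}.\eqref{14a} and Proposition~\ref{18} is legitimate. The gap is the ``distance duality'' $\dist(B\backslash A)=\dist\bigl((K-A)\backslash(K-B)\bigr)$ which, as you say yourself, carries all the real content: this statement is simply not available for abstract good semigroups, and its general validity is unknown. The paper has it only for value semigroup ideals of admissible rings (Proposition~\ref{74}.\ref{74b}, where it comes from length computations in the ring). For general good semigroups, the instance $A=E$, $B=D_S^{\mu^E}$ of your duality is --- via Remark~\ref{24}, which gives $K-D_S^{\mu^E}=D_S^{\gamma-\mu^E}$ --- exactly condition \ref{26a} of Corollary~\ref{26}, i.e.\ the ``suitable assumption'' of Theorem~\ref{31}, and the paper's closing remark explicitly states that it is an open question whether this condition always holds. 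So your proof assumes something at least as strong as the open problem the whole paper is organized around. There is also a circularity obstruction to ever completing your plan inside this framework: the paper's partial results toward such a duality (Lemma~\ref{05} and Proposition~\ref{08}) are proved \emph{using} $E=K-(K-E)$, i.e.\ using the very proposition you are trying to establish; the same holds for Lemma~\ref{14}.\eqref{14c}, whose proof substitutes $K-(K-E)=E$ (you flag that input as inessential, which is fair, but it illustrates the direction of the logical dependencies).

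Note also that the paper does not prove this proposition at all: it quotes it as \cite[Theorem 5.2.6]{DSG}. The proof there follows the route you mention only in your final sentence as an alternative: a direct combinatorial argument, based on the description $K-E=\{\alpha\in D_S\mid \Delta^E(\tau-\alpha)=\emptyset\}$ of Lemma~\ref{14}.\eqref{14a} and on properties \eqref{E1} and \eqref{E2}, that $\Delta^{K-E}(\tau-\alpha)=\emptyset$ forces $\alpha\in E$. That argument is the actual mathematical content of the statement; your proposal defers it (or replaces it by an unproven duality), so as it stands it is a reduction, not a proof.
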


%%%%%%%%%%%%%%%%%%%%%%%%%%%%%%%%%%%%%%%%%%%%%%%%%%%%%%%%%%%%%%%%%%%%%%%
\subsection{Value semigroups}
%%%%%%%%%%%%%%%%%%%%%%%%%%%%%%%%%%%%%%%%%%%%%%%%%%%%%%%%%%%%%%%%%%%%%%%

We now give a few definitions regarding rings, in order to make clear the connection between their value semigroups and good semigroups.

In the following, $R$ is a commutative ring with 1, and $Q_R$ its total ring of fractions.
We always assume fractional ideals of $R$ to be regular, i.e.\ to contain at least a regular element.

%%%%%%%%%%%%%%%%%%%%%%%%%%%%%%%%%%%%%%%%%%%%%%%%%%%%%%%%%%%%%%%%%%%%%%%

\begin{dfn}
A \emph{valuation ring} of $Q_R$ is a subring $V \subsetneq Q_R$ such that the set $Q_R \setminus V$ is multiplicatively closed.

If $R\subseteq V$, we call $V$ a \emph{valuation ring over $R$}.
We denote by $\V_R$ the set of all valuation rings of $Q_R$ over $R$.

A valuation ring $V$ of $Q$ with unique regular maximal ideal $\mm_V$ is called a \emph{discrete valuation ring} if $\mm_V$ is the only regular prime ideal of $V$ (see \cite[Ch.~I, (2.16) Def.]{KV04}).

A \emph{discrete valuation} of $Q_R$ is a map $\nu\colon Q_R\twoheadrightarrow\ZZ\cup\{\infty\}$ satisfying
\begin{equation*}
\nu (xy)=\nu(x)+\nu(y),\quad
\nu(x+y)\geq\min\{\nu(x),\nu(y)\}
\end{equation*}
for any $x,y \in Q_R$.
We refer to $\nu(x)\in\ZZ\cup\{\infty\}$ as the \emph{value} of $x\in Q_R$ with respect to $\nu$.
\end{dfn}

%%%%%%%%%%%%%%%%%%%%%%%%%%%%%%%%%%%%%%%%%%%%%%%%%%%%%%%%%%%%%%%%%%%%%%%

The following theorem is \cite[Ch.~II, (2.11) Thm.]{KV04}, and characterizes valuation rings over one-dimensional semilocal Cohen--Macaulay rings.

\begin{thm}
Let $R$ be a one-dimensional semilocal Cohen--Macaulay ring.
The set $\V_R$ is finite and non-empty, and it contains discrete valuation rings only.
\end{thm}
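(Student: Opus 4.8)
The plan is to reduce the statement to the classical structure theory of one-dimensional normal domains by means of two reductions: first to the reduced case, then to a single branch.

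\emph{Reduction to the reduced case.} I would first observe that the nilradical $\mathfrak{N}$ of $Q_R$ is contained in every $V\in\V_R$. Indeed, if a nilpotent $n$ lay in $Q_R\setminus V$, then, since $Q_R\setminus V$ is multiplicatively closed, every power $n^k$ would lie in $Q_R\setminus V$; but $n^k=0\in V$ for $k\gg 0$, a contradiction. Hence passing to the quotient by $\mathfrak{N}$ induces a bijection between $\V_R$ and $\V_{R_{\mathrm{red}}}$, and $R_{\mathrm{red}}$ is again one-dimensional, semilocal and (being reduced of dimension one) Cohen--Macaulay. So I may assume $R$ reduced.

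\emph{Reduction to a single branch.} Since $R$ is one-dimensional and Cohen--Macaulay, its associated primes are exactly its finitely many minimal primes $\mathfrak{p}_1,\dots,\mathfrak{p}_r$, and the regular elements are precisely those of $R\setminus\bigcup_i\mathfrak{p}_i$. Localizing at this set and using reducedness gives $Q_R\cong\prod_{i=1}^r K_i$ with $K_i=\mathrm{Quot}(R/\mathfrak{p}_i)$ a field; let $e_1,\dots,e_r\in Q_R$ be the associated orthogonal idempotents. For $V\in\V_R$ and any idempotent $e$ we have $e(1-e)=0\in V$, so $e$ and $1-e$ cannot both lie in the multiplicatively closed set $Q_R\setminus V$; this forces $V$ to be the preimage $\pi_j^{-1}(V_j)$ under a single projection $\pi_j\colon Q_R\to K_j$ of a valuation ring $R/\mathfrak{p}_j\subseteq V_j\subsetneq K_j$. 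Thus $\V_R$ should be in bijection with the disjoint union over $j$ of the sets of valuation rings of $K_j$ lying over the domain $R/\mathfrak{p}_j$. I expect this branch reduction to be the \textbf{main obstacle}: one must use the Manis-type notion of valuation ring appropriate to the non-field $Q_R$, verify rigorously that the idempotent argument pins $V$ to a single factor $K_j$, and check that each pullback $\pi_j^{-1}(V_j)$ again has multiplicatively closed complement, so that the correspondence is a genuine bijection.

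\emph{The single-branch case.} For a fixed branch write $A=R/\mathfrak{p}_j$, a one-dimensional semilocal Noetherian domain with fraction field $K=K_j$. By Krull--Akizuki the integral closure $\ol A$ of $A$ in $K$ is a one-dimensional Noetherian normal domain, i.e.\ a Dedekind domain, and it is semilocal (lying over the finitely many maximal ideals of $A$), hence a principal ideal domain with finitely many maximal ideals $\mathfrak{m}_1,\dots,\mathfrak{m}_t$, where $t\ge 1$ because $\ol A\subsetneq K$. Any valuation ring $V$ with $A\subseteq V\subsetneq K$ is integrally closed and therefore contains $\ol A$; being a valuation overring of the Dedekind domain $\ol A$ distinct from $K$, it equals the localization $\ol A_{\mathfrak{m}_k}$ at its center $\mathfrak{m}_k$. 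Each such $\ol A_{\mathfrak{m}_k}$ is a discrete valuation ring whose only regular (indeed only nonzero) prime is $\mathfrak{m}_k\ol A_{\mathfrak{m}_k}=\mm_V$, matching the definition of discrete valuation ring recalled above, and conversely each $\ol A_{\mathfrak{m}_k}$ contains $A$. So the branch contributes exactly $t$ valuation rings, all of them discrete.

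Summing these finite and nonempty families over the finitely many branches then yields that $\V_R$ is finite, non-empty, and consists of discrete valuation rings only. The reduced and Dedekind steps are routine applications of Krull--Akizuki and of the structure of valuation overrings of Dedekind domains; only the branch-selection argument of the second paragraph requires genuine care.
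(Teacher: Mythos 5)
The paper itself offers no proof of this theorem: it is quoted from [KV04, Ch.~II, (2.11) Thm.] (Kiyek--Vicente), so there is no in-paper argument to compare against. Your plan is in fact the standard route behind that citation --- kill nilpotents, split $Q_R$ into a product of fields indexed by the branches, then apply Krull--Akizuki and the classification of valuation overrings of a Dedekind domain --- and your first and third steps are correct as written.

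The genuine gap is exactly the step you flagged. The observation $e(1-e)=0\in V$ only shows that for each $i$ at least one of $e_i$, $1-e_i$ lies in $V$; it does not exclude the a priori case $e_i\notin V$, $1-e_i\in V$, and a $V$ of that shape would \emph{not} be a pullback $\pi_j^{-1}(V_j)$, since every such pullback contains every idempotent ($\pi_j(e_i)\in\{0,1\}\subseteq V_j$). So as stated the argument does not pin $V$ to a single factor. It can be closed by using multiplicative closedness against $0\in V$ once more: suppose $e_1\notin V$. For every $a\in K_1$ and every $\beta\in\prod_{i\ge 2}K_i$ the elements $x=(a,0,\dots,0)$ and $y=(0,\beta)$ satisfy $xy=0\in V$, so $x$ and $y$ cannot both lie in $Q_R\setminus V$. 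If some $y_0=(0,\beta_0)\notin V$, then every $(a,0,\dots,0)\in V$; taking $a=1$ gives $e_1\in V$, a contradiction. Otherwise every $(0,\beta)\in V$; taking $\beta=(1,\dots,1)$ gives $1-e_1\in V$, hence $e_1=1-(1-e_1)\in V$, again a contradiction. Thus all idempotents lie in $V$, so $V=\prod_i\pi_i(V)$ (because $e_iv\in V$ for all $v\in V$), and one more application of the same trick --- two elements supported on disjoint sets of coordinates multiply to $0$ --- shows that at most one factor $\pi_j(V)$ can be proper in $K_j$; properness of $V$ gives exactly one, and multiplicative closedness of the complement descends to that factor, making it a valuation ring of the field $K_j$. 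This completes your bijection.

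Two smaller points. First, the element of $\V_R$ produced by a branch is the pullback $\pi_j^{-1}(\ol A_{\mathfrak{m}_k})$, not $\ol A_{\mathfrak{m}_k}$ itself, so the paper's notion of discrete valuation ring (unique regular maximal ideal, which is the only regular prime ideal) must be verified for the pullback: its primes are products, and the only one containing a non-zero-divisor of $Q_R$ (an element with all coordinates nonzero) is $K_1\times\cdots\times\mathfrak{m}_k\ol A_{\mathfrak{m}_k}\times\cdots\times K_r$, so the check does go through. Second, if ``one-dimensional'' is read as $\dim R=1$ rather than as equidimensionality, a branch $R/\mathfrak{p}_j$ may be a field, in which case $\ol A=K$ and that branch contributes the empty family; non-emptiness of $\V_R$ still holds because at least one branch has dimension one, but your final sentence (``nonempty families over the branches'') needs this caveat.
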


%%%%%%%%%%%%%%%%%%%%%%%%%%%%%%%%%%%%%%%%%%%%%%%%%%%%%%%%%%%%%%%%%%%%%%%

Thanks to this theorem, we can give the following definition:

\begin{dfn}
Let $R$ be a one-dimensional semilocal Cohen--Macaulay ring, and let $\V_R$ be the set of (discrete) valuation rings of $Q_R$ over $R$ with valuations
\[
\nu=\nu_R:=(\nu_V)_{V\in\V_R}\colon Q_R \to (\ZZ\cup\{\infty\})^{\V_R}.
\]
To each fractional ideal $\E$ of $R$ we associate its \emph{value semigroup ideal}
\[
	\Gamma_\E := \nu(\{x\in \E\mid x \text{ is regular}\}) \subseteq \ZZ^{\V_R}.
\]
If $\E=R$, then the monoid $\Gamma_R$ is called the \emph{value semigroup} of $R$.
\end{dfn}

%%%%%%%%%%%%%%%%%%%%%%%%%%%%%%%%%%%%%%%%%%%%%%%%%%%%%%%%%%%%%%%%%%%%%%%

The following additional definitions are needed to make the value semigroup of a ring into a good semigroup.

\begin{dfn}\label{88}
Let $R$ be a one-dimensional semilocal Cohen--Macaulay ring.
Let $\wh R$ denote its completion at the Jacobson radical and $\ol R$ its integral closure in its total ring of fractions $Q_R$.
\begin{enumerate}[label=(\alph*), ref=\alph*]
\item $R$ is \emph{analytically reduced} if $\wh R$ is reduced or, equivalently, $\wh{R_\mm}$ is reduced for all maximal ideals $\mm$ of $R$. 
\item $R$ is \emph{residually rational} if $\ol{R}/\nn=R/\nn\cap R$ for all maximal ideals $\nn$ of $\ol{R}$.
\item $R$ has \emph{large residue fields} if $\lvert R/\mm \rvert \ge \lvert \V_{R_\mm} \rvert$ for all maximal ideals $\mm$ of $R$.
\item $R$ is \emph{admissible} if it is analytically reduced and residually rational with large residue fields.
\end{enumerate}
\end{dfn}

%%%%%%%%%%%%%%%%%%%%%%%%%%%%%%%%%%%%%%%%%%%%%%%%%%%%%%%%%%%%%%%%%%%%%%%

The following was proven in \cite[Cor.~3.2.3]{DSG}.

\begin{prp}
If $R$ is admissible, then its value semigroup $\Gamma_R$ is a good semigroup, and $\Gamma_\E$ is a good semigroup ideal for any fractional ideal $\E$ of $R$.
\end{prp}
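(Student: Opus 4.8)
The plan is to verify the defining axioms directly for $E=\Gamma_R$ and for $E=\Gamma_\E$, exploiting that $\nu=(\nu_V)_{V\in\V_R}$ is additive on products; this is exactly \cite[Cor.~3.2.3]{DSG}, and I sketch the mechanism. Write $s=\lvert\V_R\rvert$ and identify $D_S\cong\ZZ^s$. The monoid and ideal structure are formal: since $\nu(xy)=\nu(x)+\nu(y)$ and $1\in R$ is regular with $\nu(1)=0$, the set $\Gamma_R$ is a submonoid of $\NN^s$, nonnegativity holding because $R\subseteq V$ forces $\nu_V\ge 0$ on regular elements of $R$. Likewise $\E\cdot R\subseteq\E$ gives $\Gamma_\E+\Gamma_R\subseteq\Gamma_\E$, and a regular $d$ with $d\E\subseteq R$ yields $\nu(d)+\Gamma_\E\subseteq\Gamma_R$, so $\Gamma_\E$ is a semigroup ideal. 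For \eqref{E0} on $\Gamma_R$, the conductor $\mathfrak{c}=R:\ol R$ is a regular ideal because $\ol R$ is a finite $R$-module; since $\ol R$ is a finite product of discrete valuation rings, the values of $\mathfrak{c}$ produce an $\alpha$ with $\alpha+\NN^s\subseteq\Gamma_R$, and the semigroup ideal $\Gamma_\E$ then satisfies \eqref{E0} automatically.

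The content is \eqref{E1} and \eqref{E2}, both of which I would deduce from a single coefficient-selection argument. Fix regular $x,y\in\E$ with $\nu(x)=\alpha$ and $\nu(y)=\beta$. For each $V$ with $\alpha_V=\beta_V=:n_V$, pass to the completion at $V$ and write $x=u_V t_V^{n_V}+\cdots$, $y=w_V t_V^{n_V}+\cdots$ with $u_V,w_V$ units; then for a unit $\lambda\in R$ one has $\nu_V(x+\lambda y)>n_V$ precisely when $\lambda$ reduces to $-u_V w_V^{-1}$ in the residue field, and $\nu_V(x+\lambda y)=n_V$ otherwise, while at every $V$ with $\alpha_V\ne\beta_V$ the value of $x+\lambda y$ equals $\min\{\alpha_V,\beta_V\}$ for any unit $\lambda$. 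Thus each index where $\alpha$ and $\beta$ agree contributes a single forbidden residue.

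For \eqref{E1} I would choose a unit $\lambda$ whose residue avoids all of the finitely many critical values $-u_V w_V^{-1}$; then $\nu(x+\lambda y)=\min\{\alpha,\beta\}$ coordinatewise, so $\min\{\alpha,\beta\}\in\Gamma_\E$. For \eqref{E2}, given $j$ with $\alpha_j=\beta_j$, I would instead choose $\lambda$ whose residue equals $-u_{V_j}w_{V_j}^{-1}$ at the maximal ideal below $V_j$, forcing $\nu_{V_j}(x+\lambda y)>\alpha_j$; at the indices where $\alpha$ and $\beta$ differ the value is exactly $\min\{\alpha_i,\beta_i\}$, and at the remaining agreeing indices it is $\ge\min\{\alpha_i,\beta_i\}$, which is precisely \eqref{E2}.

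The main obstacle is that such a $\lambda$ need not exist over an arbitrary ring, and this is exactly where residual rationality and large residue fields enter. Residual rationality identifies the residue field of $R$ with that of $\ol R$ at each maximal ideal, so the critical quantities $-u_V w_V^{-1}$ genuinely lie in $R/\mm$ and can be matched or avoided by residues of elements of $R$; the large residue field hypothesis $\lvert R/\mm\rvert\ge\lvert\V_R\rvert$ then supplies enough residues to dodge the at most $s$ forbidden values simultaneously in the \eqref{E1} case. One must also check that the constructed $x+\lambda y$ is again regular, which holds because all its values $\nu_V$ are finite, so it lies in no minimal prime of the reduced ring $\wh R$. Finally, since $R$ is only semilocal one runs this selection compatibly across the maximal ideals, the analytically reduced hypothesis being what makes the local uniformizer expansions and the product decomposition of $\ol R$ available in the first place.
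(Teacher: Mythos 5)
Your overall strategy is the right one, and it is essentially the strategy behind the result you cite: the paper itself gives no proof here, it simply invokes \cite[Cor.~3.2.3]{DSG}, and that proof likewise verifies (E0)--(E2) by valuation-theoretic element manipulation. Your treatment of the formal parts is sound: the monoid and semigroup-ideal structure from additivity of $\nu$, property (E0) from the conductor of $R$ in $\ol R$, regularity of the constructed element from finiteness of all its values in the analytically reduced setting, and the (E2) construction (matching the single critical residue $-u_{V_j}w_{V_j}^{-1}$ at the relevant maximal ideal, which is where residual rationality is genuinely used, while keeping $\lambda$ a unit elsewhere) is correct as sketched.

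The genuine gap is the counting in your (E1) step. With a one-parameter family $x+\lambda y$, the forbidden residues at a maximal ideal $\mm$ are the critical values $-u_Vw_V^{-1}$ for each valuation $V$ over $\mm$ where $\alpha_V=\beta_V$, \emph{together with} $0$ whenever some $V$ over $\mm$ has $\alpha_V>\beta_V$ (there you must keep $\nu_V(\lambda)=0$, or the minimum $\beta_V$ is destroyed). That is up to $\lvert\V_{R_\mm}\rvert$ distinct forbidden values, which can exhaust the residue field precisely at the stated sharp bound $\lvert R/\mm\rvert\ge\lvert\V_{R_\mm}\rvert$: for instance $R/\mm=\mathbb{F}_3$ with three valuations over $\mm$, two agreeing coordinates with distinct critical residues $1$ and $2$, and one coordinate with $\alpha_V>\beta_V$ forces $\bar\lambda\notin\{0,1,2\}=\mathbb{F}_3$, so no admissible $\lambda$ exists. (Note that $\bar\lambda=0$ is actually harmless at agreeing coordinates, since then $\nu_V(\lambda y)>\alpha_V$ and the minimum survives; so the failure requires a coordinate with $\alpha_V>\beta_V$, but such configurations do occur.) Your argument as written therefore proves the statement only under the stronger hypothesis $\lvert R/\mm\rvert>\lvert\V_{R_\mm}\rvert$, e.g.\ for infinite residue fields as in \cite{CDK94}. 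The standard repair, matching the sharp hypothesis of Definition \ref{88}, is to use a two-parameter combination $\mu x+\lambda y$: each valuation over $\mm$ contributes at most one forbidden line through the origin in $(R/\mm)^2$ (the line $\bar\mu u_V+\bar\lambda w_V=0$ at agreeing coordinates, $\bar\lambda=0$ where $\alpha_V>\beta_V$, $\bar\mu=0$ where $\alpha_V<\beta_V$), so at most $\lvert\V_{R_\mm}\rvert$ of the $\lvert R/\mm\rvert+1\ge\lvert\V_{R_\mm}\rvert+1$ lines are excluded, and any nonzero point on a surviving line, lifted compatibly over the finitely many maximal ideals, gives $\nu(\mu x+\lambda y)=\min\{\alpha,\beta\}$.
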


%%%%%%%%%%%%%%%%%%%%%%%%%%%%%%%%%%%%%%%%%%%%%%%%%%%%%%%%%%%%%%%%%%%%%%%

\begin{ntn}
Let $R$ be an admissible ring, and let $\E$ be a fractional ideal of $R$.
For any $\alpha\in D_S$ denote
\[
\E^\alpha:=\{x\in \E\mid \nu(x)\ge\alpha\}.
\]
\end{ntn}

%%%%%%%%%%%%%%%%%%%%%%%%%%%%%%%%%%%%%%%%%%%%%%%%%%%%%%%%%%%%%%%%%%%%%%%

There is a clear link between filtrations of fractional ideals and filtrations of good semigroup ideals (see \cite[Lemma~3.1.3]{DSG}):
\begin{lem}\label{15}
Let $R$ be an admissible ring, and let $\E$ be a fractional ideal of $R$.
Then $\E^\alpha$ is a (regular) fractional ideal of $R$ and $(\Gamma_{\E})^\alpha=\Gamma_{\E^\alpha}$ for all $\alpha\in D_S$. 
\end{lem}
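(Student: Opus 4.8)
The plan is to prove the two assertions in turn, observing that the equality $(\Gamma_\E)^\alpha=\Gamma_{\E^\alpha}$ is little more than an unwinding of definitions once we know that $\E^\alpha$ is a regular fractional ideal. The real content is therefore that $\E^\alpha$ is again a regular fractional ideal of $R$, and in particular that it contains a regular element.

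First I would verify that $\E^\alpha$ is an $R$-submodule of $Q_R$. For $x,y\in\E^\alpha$ the valuation axioms give, componentwise, $\nu(x+y)\ge\min\{\nu(x),\nu(y)\}\ge\alpha$, so $x+y\in\E^\alpha$; and for $r\in R$, the inclusion $R\subseteq V$ for every $V\in\V_R$ forces $\nu(r)\ge 0$, so that $\nu(rx)=\nu(r)+\nu(x)\ge\nu(x)\ge\alpha$ and $rx\in\E^\alpha$. Since $\E^\alpha\subseteq\E$ and $\E$ is a fractional ideal, any regular $d\in R$ with $d\E\subseteq R$ also satisfies $d\E^\alpha\subseteq R$; hence $\E^\alpha$ is a fractional ideal as soon as it is nonzero.

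The key step, and the one I expect to be the main obstacle, is to exhibit a regular element of $\E^\alpha$. Here I would use that $\Gamma_\E$ is a good semigroup ideal, so it satisfies \eqref{E0}: there is $\alpha_0\in\ZZ^s$ with $\alpha_0+\NN^s\subseteq\Gamma_\E$. Taking $\beta\in\ZZ^s$ with $\beta_i=\max\{\alpha_i,(\alpha_0)_i\}$ for each $i$, we have $\beta\ge\alpha_0$, hence $\beta\in\Gamma_\E$, and also $\beta\ge\alpha$. By the definition of $\Gamma_\E$ there is a regular $x\in\E$ with $\nu(x)=\beta\ge\alpha$, that is, $x\in\E^\alpha$. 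Thus $\E^\alpha$ contains a regular element, so it is a regular fractional ideal and $\Gamma_{\E^\alpha}$ is well defined. Without this step there need be no elements of large value to take values of, and it is precisely property \eqref{E0} of the value semigroup ideal, itself a consequence of admissibility, that supplies them.

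Finally I would establish $(\Gamma_\E)^\alpha=\Gamma_{\E^\alpha}$ by double inclusion. If $\beta\in\Gamma_{\E^\alpha}$, then $\beta=\nu(x)$ for a regular $x\in\E^\alpha$; since such $x$ lies in $\E$ and satisfies $\nu(x)\ge\alpha$, we get $\beta\in\Gamma_\E$ with $\beta\ge\alpha$, i.e.\ $\beta\in(\Gamma_\E)^\alpha$. Conversely, if $\beta\in(\Gamma_\E)^\alpha$, then $\beta=\nu(x)$ for some regular $x\in\E$ with $\nu(x)=\beta\ge\alpha$, so $x\in\E^\alpha$ and $\beta\in\Gamma_{\E^\alpha}$. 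Both inclusions being immediate, the only genuine work lies in the regularity statement handled above.
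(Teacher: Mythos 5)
Your proof is correct. Note that the paper does not actually prove this lemma itself: it is quoted from \cite[Lemma~3.1.3]{DSG}, so there is no internal argument to compare against. Your route is the natural one and matches the intended reasoning: closure of $\E^\alpha$ under addition and $R$-multiplication from the valuation axioms and $\nu(R)\ge 0$, fractionality from $\E^\alpha\subseteq\E$, and---the only substantive point, which you correctly isolate---regularity of $\E^\alpha$ via property \eqref{E0} of the good semigroup ideal $\Gamma_\E$, which produces a regular element of value $\ge\alpha$; the equality $(\Gamma_\E)^\alpha=\Gamma_{\E^\alpha}$ is then the definition-unwinding double inclusion you give.
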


%%%%%%%%%%%%%%%%%%%%%%%%%%%%%%%%%%%%%%%%%%%%%%%%%%%%%%%%%%%%%%%%%%%%%%%

The following was proven first by D'Anna \cite[Proposition~2.2]{D'A97} and then extended in \cite[Proposition 4.2.7]{DSG}.

\begin{prp}\label{17}
Let $R$ be an admissible ring, and let $\E, \F$ be two fractional ideals of $R$ with $\E\subseteq \F$. 
Then
\[
\ell_R(\F/\E)=\dist(\Gamma_\F\backslash \Gamma_\E),
\]
where $\ell_R(\F/\E)$ denotes the length of the quotient $\F/\E$ as $R$-module.
\end{prp}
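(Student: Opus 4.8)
The plan is to compare both sides by pushing $\E$ and $\F$ into a common tail, thereby reducing the statement to a single filtration computation. First I would fix $\delta\in D_S$ with $\delta\ge\gamma^{\Gamma_\E}$ large enough that $\E^\delta=\F^\delta$; such a $\delta$ exists because a regular fractional ideal contains $\{x\in\ol R\mid\nu(x)\ge\delta\}$ once $\delta$ lies beyond its conductor, so for $\delta$ beyond the conductors of both $\E$ and $\F$ the two filtered pieces coincide with this common tail of $\ol R$. Since $\E^\delta=\F^\delta\subseteq\E\subseteq\F$, additivity of length along the tower $\E^\delta\subseteq\E\subseteq\F$ gives
\[
\ell_R(\F/\E)=\ell_R(\F/\F^\delta)-\ell_R(\E/\E^\delta).
\]
On the semigroup side I would bring the definition of distance into the same shape. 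Using that $\gamma^{\Gamma_\E}\ge\gamma^{\Gamma_\F}$ lies in both $\Gamma_\E$ and $\Gamma_\F$, and that above $\gamma^{\Gamma_\E}$ both ideals equal the cone $\gamma^{\Gamma_\E}+\ol S$, the chain lengths $\dist_{\Gamma_\F}(\gamma^{\Gamma_\E},\delta)$ and $\dist_{\Gamma_\E}(\gamma^{\Gamma_\E},\delta)$ are both equal to $\sum_i(\delta_i-\gamma^{\Gamma_\E}_i)$ and cancel, so that
\[
\dist(\Gamma_\F\backslash\Gamma_\E)=\dist_{\Gamma_\F}(\mu^{\Gamma_\F},\delta)-\dist_{\Gamma_\E}(\mu^{\Gamma_\E},\delta).
\]
Everything therefore reduces to the single identity $\ell_R(\E/\E^\delta)=\dist_{\Gamma_\E}(\mu^{\Gamma_\E},\delta)$, applied to both $\E$ and $\F$.

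To prove this identity I would telescope the filtration along unit steps. Choosing a chain $\mu^{\Gamma_\E}=\alpha^{(0)}<\cdots<\alpha^{(N)}=\delta$ with $\alpha^{(k+1)}=\alpha^{(k)}+\ee_{i_k}$, and using $\E=\E^{\mu^{\Gamma_\E}}$, additivity yields $\ell_R(\E/\E^\delta)=\sum_k\ell_R(\E^{\alpha^{(k)}}/\E^{\alpha^{(k)}+\ee_{i_k}})$. The heart of the matter is the local computation that each factor $\ell_R(\E^\alpha/\E^{\alpha+\ee_i})$ equals $1$ when $\ol\Delta^{\Gamma_\E}_i(\alpha)\ne\emptyset$ and $0$ otherwise. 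For the upper bound, given $x,y\in\E^\alpha$ with $\nu_{V_i}(x)=\nu_{V_i}(y)=\alpha_i$, residual rationality produces $u\in R$ whose residue at $V_i$ matches the ratio of the leading coefficients of $x$ and $y$, so that $x-uy\in\E^{\alpha+\ee_i}$; hence the quotient is cyclic over the appropriate residue field $R/\mm$, and it is nonzero precisely when some element of $\E^\alpha$ realizes a value in $\ol\Delta^{\Gamma_\E}_i(\alpha)$, i.e.\ when $\Gamma_{\E^\alpha}\ne\Gamma_{\E^{\alpha+\ee_i}}$ (Lemma~\ref{15}). This is the one step where the ring hypotheses --- analytically reduced, residually rational, large residue fields --- are genuinely used, and I expect it to be the main obstacle.

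It then remains to identify the resulting sum of local contributions with the saturated chain length $\dist_{\Gamma_\E}(\mu^{\Gamma_\E},\delta)$. Since the left-hand side is a fixed module length, the sum is automatically independent of the chosen chain of unit steps, and the matching with saturated chains of $\Gamma_\E$ is exactly the combinatorial content of property \eqref{E4} together with the distance computations of \cite{D'A97,DSG}, which I would invoke rather than redo. Combining this identity for $\E$ and for $\F$ with the two displayed reductions gives $\ell_R(\F/\E)=\dist(\Gamma_\F\backslash\Gamma_\E)$. Apart from the local length-one lemma of the previous paragraph, every remaining ingredient is formal: additivity of length, additivity of distance (Lemma~\ref{07}), and the bookkeeping of saturated chains beyond the conductor.
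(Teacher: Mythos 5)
The paper itself contains no proof of Proposition \ref{17}: it is quoted as a known result, credited to \cite[Proposition~2.2]{D'A97} and \cite[Proposition~4.2.7]{DSG}. Your proposal is, in substance, a reconstruction of the argument in those references, and it is correct: truncate both ideals at a common $\delta$ beyond the conductor of $\E$ (where $\E^\delta=\F^\delta=\{x\in Q_R\mid \nu(x)\ge\delta\}$), use additivity of length and of distance to reduce everything to the single identity $\ell_R(\E/\E^\delta)=\dist_{\Gamma_\E}(\mu^{\Gamma_\E},\delta)$, and prove that by telescoping, the heart being that $\ell_R(\E^\alpha/\E^{\alpha+\ee_i})\le 1$ with equality exactly when $\ol\Delta_i^{\Gamma_\E}(\alpha)\ne\emptyset$, which is where residual rationality enters. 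The cited proofs organize the count along saturated chains of $\Gamma_\E$ rather than unit lattice steps; your unit-step version is a cosmetic variant that in fact meshes better with the paper's own Section~3 machinery --- the counting identity you invoke at the end is precisely Proposition \ref{03} combined with $\dist(E\backslash E^\delta)=\dist_E(\mu^E,\delta)$ for $\delta\ge\gamma^E$, and since Lemmas \ref{16}, \ref{02} and Proposition \ref{03} nowhere use Proposition \ref{17}, there is no circularity in appealing to them. Two details in your sketch deserve more care. First, when you identify ``nonzero quotient'' with ``some element of $\E^\alpha$ realizes a value in $\ol\Delta_i^{\Gamma_\E}(\alpha)$'', note that an element $x\in\E^\alpha\setminus\E^{\alpha+\ee_i}$ need not be regular (it may have $\nu_{V_j}(x)=\infty$ on some branch), so it contributes no point of $\Gamma_\E$; the standard repair is to pass to $x+z$ where $z\in\E$ is regular with $\nu(z)\ge\alpha+\one$, beyond $\gamma^{\Gamma_\E}$, and componentwise different from $\nu(x)$, so that $x+z$ is regular with $\nu_{V_i}(x+z)=\alpha_i$ and $\nu(x+z)\ge\alpha$. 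Second, the hypothesis doing the work in the length-one lemma is residual rationality (plus analytic reducedness for the valuation-theoretic setup); the large-residue-field assumption is not needed there but elsewhere, namely to make $\Gamma_\E$ a good semigroup ideal at all. Neither point undermines the plan.
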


%%%%%%%%%%%%%%%%%%%%%%%%%%%%%%%%%%%%%%%%%%%%%%%%%%%%%%%%%%%%%%%%%%%%%%%

Finally, \cite[Theorem 5.3.4]{DSG} shows that Cohen-Macaulay duality translates to semigroup duality:

\begin{prp}\label{74}
Let $R$ be an admissible ring with canonical ideal $\K$.
Then
\begin{enumerate}[label=(\alph*), ref=\alph*]
\item\label{74a} $\Gamma_{\K:\F}=\Gamma_\K-\Gamma_\F$ for any fractional ideal $\F$ and
\item\label{74b} $\dist(\Gamma_\K-\Gamma_\E\backslash\Gamma_\K-\Gamma_\F)=\dist(\Gamma_\F\backslash \Gamma_\E)$ for any fractional ideals $\E,\F$ with $\E\subseteq\F$.
\end{enumerate}
\end{prp}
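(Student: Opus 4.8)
The plan is to prove part (a) directly from the definitions together with the multiplicativity of the valuations, and then to deduce part (b) by combining (a) with the length--distance dictionary of Proposition~\ref{17}.

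For part (a) I would argue by double inclusion. The inclusion $\Gamma_{\K:\F}\subseteq\Gamma_\K-\Gamma_\F$ is formal: if $x\in\K:\F$ is regular then $x\F\subseteq\K$, so for every regular $y\in\F$ we get $\nu(x)+\nu(y)=\nu(xy)\in\Gamma_\K$; letting $y$ range over the regular elements of $\F$ this says precisely that $\nu(x)+\Gamma_\F\subseteq\Gamma_\K$, i.e.\ $\nu(x)\in\Gamma_\K-\Gamma_\F$. The reverse inclusion is the substantial one. Given $\alpha$ with $\alpha+\Gamma_\F\subseteq\Gamma_\K$, I must exhibit a single regular $x\in Q_R$ with $\nu(x)=\alpha$ and $x\F\subseteq\K$. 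I expect this realization step to be the main obstacle, because one element must clear the values of all of $\F$ at once, against every component $V\in\V_R$ simultaneously. The mechanism is admissibility: residual rationality identifies the relevant residue fields, and the large-residue-field hypothesis $\lvert R/\mm\rvert\ge\lvert\V_{R_\mm}\rvert$ supplies enough scalars to solve, by a pigeonhole/linear-algebra argument over the residue field, the finitely many leading-coefficient conditions that force $xy\in\K$ for each generator $y$ of $\F$ modulo higher filtration. Building $x$ as such a combination and checking $\nu(x)=\alpha$ completes the inclusion.

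For part (b), note first that $\E\subseteq\F$ gives $\K:\F\subseteq\K:\E$, and by part (a) we have $\Gamma_{\K:\E}=\Gamma_\K-\Gamma_\E$ and $\Gamma_{\K:\F}=\Gamma_\K-\Gamma_\F$. Applying Proposition~\ref{17} to the pair $\K:\F\subseteq\K:\E$ and then to $\E\subseteq\F$ yields
\[
\dist\bigl((\Gamma_\K-\Gamma_\E)\backslash(\Gamma_\K-\Gamma_\F)\bigr)=\ell_R\bigl((\K:\E)/(\K:\F)\bigr),\qquad
\dist(\Gamma_\F\backslash\Gamma_\E)=\ell_R(\F/\E),
\]
so the assertion reduces to the purely ring-theoretic equality $\ell_R((\K:\E)/(\K:\F))=\ell_R(\F/\E)$.

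I would obtain this last equality from the self-duality of $\operatorname{Hom}_R(-,\K)=\K:(-)$ attached to the canonical ideal. Applying this functor to $0\to\E\to\F\to\F/\E\to 0$ and using that $\K$ is dualizing for the one-dimensional Cohen--Macaulay ring $R$ (so that a maximal Cohen--Macaulay module has no higher Ext into $\K$, while a finite-length module $M$ satisfies $\operatorname{Hom}_R(M,\K)=0$ and $\operatorname{Ext}^1_R(M,\K)$ has the same length as $M$) produces a short exact sequence
\[
0\to\K:\F\to\K:\E\to\operatorname{Ext}^1_R(\F/\E,\K)\to 0,
\]
whence $\ell_R((\K:\E)/(\K:\F))=\ell_R(\F/\E)$. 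Alternatively, and more in the spirit of this paper, one can prove the combinatorial identity $\dist((K-E)\backslash(K-F))=\dist(F\backslash E)$ for arbitrary good semigroup ideals $E\subseteq F$, exploiting the involution $K-(K-E)=E$ of Proposition~\ref{22} together with the additivity of distance (Lemma~\ref{07}) and the equality criterion of Proposition~\ref{18} to reduce to consecutive pairs; this keeps the whole argument inside $\GSI_S$ and avoids external homological input. The delicate point in either route is tracking how the duality interacts with the filtration $E^\bullet$, which is exactly what part (a) and the distance calculus are set up to control.
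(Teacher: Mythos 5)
The first thing to note is that the paper contains no proof of Proposition~\ref{74} at all: it is quoted verbatim from \cite[Theorem 5.3.4]{DSG}, so the relevant comparison is with the proof given there. Measured against that, your proposal has a genuine gap, and it sits exactly where you suspected: the reverse inclusion $\Gamma_\K-\Gamma_\F\subseteq\Gamma_{\K:\F}$ in part (a). Your formal inclusion $\Gamma_{\K:\F}\subseteq\Gamma_\K-\Gamma_\F$ is fine, but the proposed realization step (solve ``leading-coefficient conditions'' for the generators of $\F$ by linear algebra over the residue fields) does not work as described. Membership in $\K$ is not a condition on values: for a fractional ideal $\E$ one has in general $\E\subsetneq\nu^{-1}(\Gamma_\E)$ --- already for the node $R=\{(f,g)\in k[[t_1]]\times k[[t_2]]\mid f(0)=g(0)\}$ the element $(1,2)$ has value $(0,0)\in\Gamma_R$ but does not lie in $R$. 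So forcing $xy\in\K$ for each generator $y$ of $\F$ is not a finite list of leading-term conditions ``modulo higher filtration''; it imposes constraints in every order, and no single pigeonhole/linear-algebra step over the residue field disposes of them. Since your proof of (b) is an explicit reduction to (a) (via Proposition~\ref{17} and Cohen--Macaulay duality), the proposal as a whole is incomplete.

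The way \cite{DSG} actually proves (a) is instructive, because it inverts your logical order: the distance calculus you postpone to part (b) is the engine that proves part (a), and no element with prescribed value is ever constructed. One first proves, purely combinatorially, that $\Gamma_\K$ is a canonical semigroup ideal and that $\dist((K-E)\backslash(K-F))=\dist(F\backslash E)$ for good semigroup ideals $E\subseteq F$ --- this is precisely your ``alternative route'', which in \cite{DSG} is an independent prior theorem, not something derived from the ring side. Then: for a principal ideal the statement is trivial, since $\K:xR=x^{-1}\K$ while $\Gamma_\K-\Gamma_{xR}=-\nu(x)+\Gamma_\K$ by Remark~\ref{21}. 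Choosing a regular $x\in\F$, so that $xR\subseteq\F$, gives the sandwich $\Gamma_{\K:\F}\subseteq\Gamma_\K-\Gamma_\F\subseteq\Gamma_\K-\Gamma_{xR}=\Gamma_{\K:xR}$. Now $\dist(\Gamma_{\K:xR}\backslash\Gamma_{\K:\F})=\ell_R((\K:xR)/(\K:\F))=\ell_R(\F/xR)$ by Proposition~\ref{17} and Cohen--Macaulay duality, while $\dist((\Gamma_\K-\Gamma_{xR})\backslash(\Gamma_\K-\Gamma_\F))=\dist(\Gamma_\F\backslash\Gamma_{xR})=\ell_R(\F/xR)$ by the combinatorial duality theorem and Proposition~\ref{17} again; additivity (Lemma~\ref{07}) then forces $\dist((\Gamma_\K-\Gamma_\F)\backslash\Gamma_{\K:\F})=0$, and Proposition~\ref{18} yields equality. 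Your derivation of (b) from (a) --- Proposition~\ref{17} applied twice plus $\ell_R((\K:\E)/(\K:\F))=\ell_R(\F/\E)$, justified by the dualizing property of $\K$ --- is sound (also in the semilocal setting, arguing at each maximal ideal). So the unproved content of your proposal is exactly (a), and the admissibility hypotheses enter the true proof through Proposition~\ref{17} and the combinatorics of canonical semigroup ideals, not through a residue-field realization argument.
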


%%%%%%%%%%%%%%%%%%%%%%%%%%%%%%%%%%%%%%%%%%%%%%%%%%%%%%%%%%%%%%%%%%%%%%%
\section{Distance and duality}
%%%%%%%%%%%%%%%%%%%%%%%%%%%%%%%%%%%%%%%%%%%%%%%%%%%%%%%%%%%%%%%%%%%%%%%

We now prove some technical results used in the coming section.

\begin{lem}\label{16}
Let $S$ be a good semigroup, $E\in\GSI_S$, and $\alpha\in D_S$. 
Then $\dist(E^\alpha\backslash E^{\alpha+\ee_i}) \le 1$.
\end{lem}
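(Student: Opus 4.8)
The plan is to translate the distance into a count of steps in saturated chains, exploiting the fact that along any strictly increasing chain the $i$-th coordinate is nondecreasing. Throughout write $F:=E^\alpha$ and $G:=E^{\alpha+\ee_i}$; note that both are good semigroup ideals (they satisfy \eqref{E1} and \eqref{E2}, inherited from $E$), so property \eqref{E4} applies and all the distances below are defined.

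First I would identify the set-theoretic difference. Since $\ee_i\ge 0$ we have $G\subseteq F$, and a point $\beta\in F$ fails to lie in $G$ exactly when $\beta_i=\alpha_i$; thus $F\setminus G=\ol\Delta_i^E(\alpha)$ in the notation of \ref{12}. If this slab is empty the two ideals coincide and $\dist(F\backslash G)=0$ by Proposition \ref{18}, so I assume it is nonempty. By \eqref{E1} the slab is closed under coordinatewise minimum and is bounded below, hence it has a minimum $\delta$; since any $\beta\in F$ with $\beta_i=\alpha_i$ lies in the slab, one checks that $\mu^F=\delta$, whereas $\mu^G$ has $i$-th coordinate $>\alpha_i$.

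Next I would decompose a saturated chain in $F$ from $\delta$ to $\gamma^G$. As the chain increases, its $i$-th coordinate only grows, so it first runs through slab points (coordinate $i$ equal to $\alpha_i$), then makes a single crossing step to a point $q'$ with $q'_i>\alpha_i$, and thereafter stays inside $G$; moreover this tail, being consecutive in $F$, is also saturated in $G$. Writing $\delta=p_0<\dots<p_k=q$ for the slab part, this gives
\[
\dist(F\backslash G)=d_F(\delta,\gamma^G)-d_G(\mu^G,\gamma^G)=k+1+d_G(q',\gamma^G)-d_G(\mu^G,\gamma^G),
\]
and splitting the chain in $G$ at $q'$ reduces the claim to the single inequality $k\le d_G(\mu^G,q')$.

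The main obstacle, and the heart of the argument, is this last inequality: the steps climbed inside the slab must be ``paid for'' inside $G$. I would prove it by lifting: for each slab point $p_l$ set $q'_l:=\min\{g\in E\mid g\ge p_l,\ g_i>\alpha_i\}$, which exists by \eqref{E1} and \eqref{E2}, lies in $G$, and satisfies $q'_k=q'$. Monotonicity $q'_l\le q'_{l+1}$ is immediate; the key point is \emph{strictness}. Applying \eqref{E2} to the consecutive slab points $p_l<p_{l+1}$ (which are equal in coordinate $i$) produces a point of $E$ lying above $p_l$ and off the slab but \emph{not} above $p_{l+1}$, which forces $q'_l\ne q'_{l+1}$. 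Hence $q'_0<\dots<q'_k=q'$ is a strictly increasing chain of $k+1$ elements of $G$, so $d_G(\mu^G,q')\ge d_G(q'_0,q')\ge k$, and therefore $\dist(F\backslash G)\le 1$. The delicate checks throughout are the uniqueness of the crossing step, the saturation of the off-slab tail in $G$, and the strict monotonicity of the lift, all of which rest on property \eqref{E2}.
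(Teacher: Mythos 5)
Your proof is correct, but it follows a genuinely different route from the paper's. Writing $F=E^\alpha$ and $G=E^{\alpha+\ee_i}$ as you do, the paper first collapses the problem to a distance between minima: since a saturated chain in $G$ from $\mu^G$ to $\gamma^G$ stays saturated in $F$, additivity (Lemma \ref{07}, property \eqref{E4}) gives $\dist(F\backslash G)=\dist_F(\mu^F,\mu^G)$, and this is bounded by $1$ by contradiction --- if a saturated chain $\mu^F=\alpha^{(0)}<\alpha^{(1)}<\cdots<\alpha^{(m)}=\mu^G$ in $F$ had length $m\ge 2$, then $\alpha^{(0)},\alpha^{(1)}$ both lie in $\ol\Delta^E_i(\alpha)$, and one application of \eqref{E2} at coordinate $i$ produces $\beta\in G$ with $\beta_j=\alpha^{(0)}_j<\mu^G_j$ for some $j\ne i$, contradicting the minimality of $\mu^G$ via \eqref{E1}. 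You instead keep the whole chain down to $\gamma^G$, split it into slab part $p_0<\cdots<p_k$, crossing step, and tail, reduce to the inequality $k\le\dist_G(\mu^G,q')$, and prove that by lifting each slab point $p_l$ to $q'_l=\min E^{p_l+\ee_i}\in G$, with strict monotonicity of the lift coming from \eqref{E2} applied once per consecutive slab pair. Both arguments turn on exactly the same use of \eqref{E2} (with $j=i$, on consecutive points of the slab $\ol\Delta^E_i(\alpha)$), but the architecture differs: the paper's two-point contradiction against the minimality of $\mu^G$ is considerably shorter, while your lifting argument is quantitative --- it exhibits, step by step, why every slab step must be matched by a step inside $G$ --- and is closer in spirit to the chain-counting technique the paper only develops later (Proposition \ref{03}, Lemma \ref{05}). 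All the delicate points you flag (uniqueness of the crossing step, saturation of the tail in $G$, the identity $q'_k=q'$, existence of the minima $q'_l$, strictness of the lift) do check out, so there is no gap.
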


\begin{proof}
We have the following:
\begin{gather}\label{19}
\begin{aligned}
\dist(E^\alpha\backslash E^{\alpha+\ee_i}) &= \dist_{E^\alpha}(\mu^{E^\alpha},\gamma^{E^{\alpha+\ee_i}})- \dist_{E^{\alpha+\ee_i}}(\mu^{E^{\alpha+\ee_i}},\gamma^{E^{\alpha+\ee_i}}) \\
&=\dist_{E^\alpha}(\mu^{E^\alpha},\gamma^{E^{\alpha+\ee_i}})-\dist_{E^\alpha}(\mu^{E^{\alpha+\ee_i}},\gamma^{E^{\alpha+\ee_i}})
\end{aligned}
\end{gather}
where the first equality is the definition of distance, and the second equality holds because a saturated chain between $\mu^{E^{\alpha+\ee_i}}$ and $\gamma^{E^{\alpha+\ee_i}}$ in $E^{\alpha+\ee_i}$ is also saturated in $E^\alpha$.
Now observe that $\mu^{E^\alpha}$ and $\mu^{E^{\alpha+\ee_i}}$ are always comparable. 
In fact, by minimality of $\mu^{E^\alpha}$ it has to be $\mu^{E^\alpha}=\min\{\mu^{E^\alpha},\mu^{E^{\alpha+\ee_i}}\}\le \mu^{E^{\alpha+\ee_i}}$.
So \eqref{19} becomes
\[
\dist(E^\alpha\backslash E^{\alpha+\ee_i})=\dist_{E^\alpha}(\mu^{E^\alpha},\mu^{E^{\alpha+\ee_i}}).
\]
Now let $\mu^{E^\alpha}=\alpha^{(0)}<\cdots<\alpha^{(m)}=\mu^{E^{\alpha+\ee_i}}$ be a saturated chain in $E$. 
Suppose $m\ge 2$. 
By minimality of $\mu^{E^{\alpha+\ee_i}}$, we have that $\alpha^{(k)}\in \ol\Delta^E_i(\alpha)$ for all $k<m$.
Consider $\alpha^{(0)},\alpha^{(1)}\in E$. 
They have $\alpha^{(0)}_i=\alpha^{(1)}_i=\alpha_i$ and there exists a $j\ne i$ such that $\alpha^{(0)}_j<\alpha^{(1)}_j\le\alpha^{(m)}_j=\mu^{E^{\alpha+\ee_i}}_j$. 
We can apply property \eqref{E2} to $\alpha^{(0)},\alpha^{(1)}\in E$ and obtain a $\beta\in E$ with $\beta_i>\alpha_i$ and
$\beta_j=\min\{\alpha^{(0)}_j,\alpha^{(1)}_j\}=\alpha^{(0)}_j$. 
In particular, $\beta\in E^{\alpha+\ee_i}$.
Thus, by minimality of $\mu^{E^{\alpha+\ee_i}}$, it has to be $\min\{\beta,\mu^{E^{\alpha+\ee_i}}\}=\mu^{E^{\alpha+\ee_i}}$. 
Then
$\mu^{E^{\alpha+\ee_i}}_j=\min\{\beta_j,\mu^{E^{\alpha+\ee_i}}_j\}=\min\{\alpha^{(0)}_j,\mu^{E^{\alpha+\ee_i}}_j\}=\alpha^{(0)}_j<\mu^{E^{\alpha+\ee_i}}_j$. 
This is a contradiction.
Hence the claim.
\end{proof}

%%%%%%%%%%%%%%%%%%%%%%%%%%%%%%%%%%%%%%%%%%%%%%%%%%%%%%%%%%%%%%%%%%%%%%%

\begin{lem}\label{02}
Let $S$ be a good semigroup, and let $E\in\GSI_S$. 
Then $\dist(E^\alpha\backslash E^{\alpha+\ee_i})=1$ if and only if $\ol\Delta^E_i(\alpha)\ne \emptyset$.
\end{lem}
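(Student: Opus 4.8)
The plan is to connect the distance $\dist(E^\alpha \backslash E^{\alpha+\ee_i})$ directly to the structure revealed in the proof of Lemma \ref{16}. From that proof I already know that this distance equals $\dist_{E^\alpha}(\mu^{E^\alpha}, \mu^{E^{\alpha+\ee_i}})$, the length of a saturated chain in $E^\alpha$ from the minimum of $E^\alpha$ up to the minimum of $E^{\alpha+\ee_i}$, and that this length is at most $1$. So the whole question reduces to deciding \emph{when} this length is exactly $1$ rather than $0$. By Proposition \ref{18}, the length is $0$ precisely when $E^\alpha = E^{\alpha+\ee_i}$, i.e.\ when $\mu^{E^\alpha} = \mu^{E^{\alpha+\ee_i}}$. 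Hence the distance equals $1$ if and only if $\mu^{E^\alpha} \ne \mu^{E^{\alpha+\ee_i}}$, i.e.\ the two minima genuinely differ.

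Next I would translate the condition $\mu^{E^\alpha} \ne \mu^{E^{\alpha+\ee_i}}$ into the $\ol\Delta$-language. The two filtration pieces differ only along the $i$-th coordinate: $E^{\alpha+\ee_i}$ drops exactly those elements $\beta \in E^\alpha$ with $\beta_i = \alpha_i$. I would argue the forward direction by contraposition: if $\ol\Delta^E_i(\alpha) = \emptyset$, then every $\beta \in E^\alpha$ already satisfies $\beta_i > \alpha_i$ (since $\beta \ge \alpha$ forces $\beta_i \ge \alpha_i$, and $\beta_i = \alpha_i$ would put $\beta$ in $\ol\Delta^E_i(\alpha)$), so $E^\alpha = E^{\alpha+\ee_i}$ and the distance is $0$. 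Conversely, if $\ol\Delta^E_i(\alpha) \ne \emptyset$, I want to produce an element of $E^\alpha$ lying strictly below $\mu^{E^{\alpha+\ee_i}}$; the natural candidate is $\mu^{E^\alpha}$ itself, and the task is to show $\mu^{E^\alpha} \in \ol\Delta^E_i(\alpha)$, equivalently $\mu^{E^\alpha}_i = \alpha_i$, which will force $\mu^{E^\alpha} \ne \mu^{E^{\alpha+\ee_i}}$ and hence distance $1$.

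For that converse I would take any $\delta \in \ol\Delta^E_i(\alpha)$, so $\delta \in E^\alpha$ with $\delta_i = \alpha_i$. Since $\mu^{E^\alpha} = \min\{\mu^{E^\alpha}, \delta\} \le \delta$ by \eqref{E1} and minimality, comparing $i$-th coordinates gives $\alpha_i \le \mu^{E^\alpha}_i \le \delta_i = \alpha_i$, so $\mu^{E^\alpha}_i = \alpha_i$. But every element of $E^{\alpha+\ee_i}$ has $i$-th coordinate at least $\alpha_i + 1$, so $\mu^{E^\alpha}_i = \alpha_i < \alpha_i + 1 \le \mu^{E^{\alpha+\ee_i}}_i$, whence $\mu^{E^\alpha} \ne \mu^{E^{\alpha+\ee_i}}$. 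Combined with the displayed identity $\dist(E^\alpha \backslash E^{\alpha+\ee_i}) = \dist_{E^\alpha}(\mu^{E^\alpha}, \mu^{E^{\alpha+\ee_i}})$ and Proposition \ref{18}, this yields distance exactly $1$.

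I expect the main subtlety to be the bookkeeping around the minima rather than any deep structural fact: one must be careful that $E^{\alpha+\ee_i}$ is indeed a good semigroup ideal (so that $\mu^{E^{\alpha+\ee_i}}$ exists and the distance is well defined), which follows from Definition \ref{01} and property \eqref{E4}, and that the saturated chain realizing the distance lives in $E^\alpha$ and not merely in $E$. Since the heavy lifting — reducing the distance to $\dist_{E^\alpha}(\mu^{E^\alpha}, \mu^{E^{\alpha+\ee_i}})$ and bounding it by $1$ — was already carried out in Lemma \ref{16}, the present lemma is essentially the sharp ``when is it nonzero'' companion, and the argument should be short once the $i$-th coordinate comparison is set up cleanly.
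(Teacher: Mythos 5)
Your proof is correct and follows essentially the same route as the paper: both rest on Lemma \ref{16} (distance at most $1$) together with Proposition \ref{18} (distance $0$ iff equality), reducing everything to the observation that $E^\alpha = E^{\alpha+\ee_i}$ exactly when $\ol\Delta^E_i(\alpha) = \emptyset$. The only cosmetic difference is that you establish this last equivalence by comparing the minima $\mu^{E^\alpha}$ and $\mu^{E^{\alpha+\ee_i}}$, whereas the paper gets it in one line from the disjoint decomposition $E^\alpha = E^{\alpha+\ee_i} \cup \ol\Delta^E_i(\alpha)$.
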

\begin{proof}
Observe that by definition $E^{\alpha}=E^{\alpha+\ee_i}\cup\ol \Delta^E_i(\alpha)$ and $E^{\alpha+\ee_i}\cap\ol \Delta^E_i(\alpha)=\emptyset$.
By Proposition \ref{18}, $\dist(E^\alpha\backslash E^{\alpha+\ee_i})=0$ if and only if  $E^\alpha=E^{\alpha+\ee_i}$, i.e.~if and only if $\ol \Delta^E_i(\alpha)=\emptyset$.
So the claim follows by Lemma \ref{16}. 
\end{proof}

%%%%%%%%%%%%%%%%%%%%%%%%%%%%%%%%%%%%%%%%%%%%%%%%%%%%%%%%%%%%%%%%%%%%%%%

The following proposition characterizes the distance in terms of $\ol \Delta$-sets.

\begin{prp}\label{03}
Let $S$ be a good semigroup, $E\in \GSI_S$, and $\alpha,\beta \in D_S$ with $\alpha\le\beta$.
Then $E^\beta\subseteq E^\alpha$. 

Let $\alpha=\alpha^{(0)}<\alpha^{(1)}<\cdots<\alpha^{(n)}=
\beta$ be a saturated chain in $D_S$, with $\alpha^{(j+1)}=\alpha^{(j)}+\ee_{i(j)}$ for any $j\in\{0,\dots,n-1\}$. 
We have:
\[
\dist(E^\alpha\backslash E^\beta)=|\{j\in\{0,\dots,n-1\}\mid \ol\Delta^E_{i(j)}(\alpha^{(j)})\ne\emptyset\}|,
\]
where $|-|$ denotes the cardinality.
\end{prp}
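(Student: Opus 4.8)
The first assertion, that $E^\beta\subseteq E^\alpha$ whenever $\alpha\le\beta$, is immediate from the definition of the filtration: any $\delta\in E$ with $\delta\ge\beta\ge\alpha$ also satisfies $\delta\ge\alpha$. The substance of the proposition is the distance formula, and my plan is to prove it by induction on $n$, the length of the saturated chain from $\alpha$ to $\beta$. The base case $n=0$ forces $\alpha=\beta$, so $E^\alpha=E^\beta$ and, by Proposition~\ref{18}, the distance is $0$, matching the empty sum on the right.

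For the inductive step, I would decompose the chain at its first link. Write $\alpha^{(1)}=\alpha+\ee_{i(0)}$, so that $\alpha^{(1)}<\cdots<\alpha^{(n)}=\beta$ is a saturated chain of length $n-1$. Since $\alpha\le\alpha^{(1)}\le\beta$, the first part gives the nesting $E^\beta\subseteq E^{\alpha^{(1)}}\subseteq E^\alpha$, and these are all good semigroup ideals satisfying \eqref{E4} by Proposition~\ref{18}'s hypotheses together with \cite[Proposition~2.3]{D'A97}. Hence Lemma~\ref{07} applies and yields the additivity
\[
\dist(E^\alpha\backslash E^\beta)=\dist(E^\alpha\backslash E^{\alpha^{(1)}})+\dist(E^{\alpha^{(1)}}\backslash E^\beta).
\]
By the inductive hypothesis the second summand equals the number of indices $j\in\{1,\dots,n-1\}$ with $\ol\Delta^E_{i(j)}(\alpha^{(j)})\ne\emptyset$. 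For the first summand, Lemma~\ref{02} shows that $\dist(E^\alpha\backslash E^{\alpha+\ee_{i(0)}})$ is $1$ if $\ol\Delta^E_{i(0)}(\alpha)\ne\emptyset$ and $0$ otherwise, which is exactly the contribution of the index $j=0$ to the right-hand side. Adding the two counts reassembles the full cardinality over $j\in\{0,\dots,n-1\}$, completing the induction.

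The one point that requires care, rather than being purely routine, is verifying that every ideal appearing in the chain satisfies property \eqref{E4}, so that both Lemma~\ref{07} and Lemma~\ref{02} are legitimately applicable at each step; this is what licenses the additivity and the single-step evaluation. Since each $E^\gamma$ for $\gamma\in D_S$ is again a good semigroup ideal of $S$ (being $E$ intersected with an upper set, preserving \eqref{E1} and \eqref{E2}), property \eqref{E4} holds for all of them by \cite[Proposition~2.3]{D'A97}, so no genuine obstacle arises. The main work is thus organizational: setting up the induction cleanly and invoking Lemmas~\ref{02} and~\ref{07} at the correct links of the chain.
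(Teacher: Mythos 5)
Your proof is correct and follows essentially the same route as the paper: the paper likewise splits the saturated chain into its unit links, applies the additivity of distance (Lemma~\ref{07}) across all links at once, and evaluates each single step via Lemma~\ref{02}; your induction on $n$ is just a formal repackaging of that telescoping sum, and your verification that each $E^{\alpha^{(j)}}$ is a good semigroup ideal (hence satisfies \eqref{E4}) is a correct justification of a point the paper leaves implicit.
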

\begin{proof}
Using the additivity of the distance (see Lemma \ref{07}), our assumptions and Lemma \ref{02} we get the following equalities:
\begin{align*}
\dist(E^\alpha\backslash E^\beta)=& \sum_{j=0}^{n-1}\dist(E^{\alpha^{(j)}}\backslash E^{\alpha^{(j+1)}})
=\sum_{j=0}^{n-1}\dist(E^{\alpha^{(j)}}\backslash E^{\alpha^{(j)}+\ee_{i(j)}})\\
=&|\{j\in\{0,\dots,n-1\}\mid \ol\Delta^E_{i(j)}(\alpha^{(j)})\ne\emptyset\}|.\qedhere
\end{align*}
\end{proof}

%%%%%%%%%%%%%%%%%%%%%%%%%%%%%%%%%%%%%%%%%%%%%%%%%%%%%%%%%%%%%%%%%%%%%%%

As a corollary, we obtain a way to compute the distance between two semigroup ideals.

\begin{cor}\label{23}
Let $S$ be a good semigroup. 
Let $E\subseteq F\in \GSI_S$, and let $\mu^F=\alpha^{(0)}<\alpha^{(1)}<\cdots<\alpha^{(m)}=
\mu^E<\cdots<\alpha^{(n)}=\gamma^E$ be a saturated chain in $D_S$. 
In particular, $\alpha^{(j+1)}=\alpha^{(j)}+\ee_{i(j)}$ for any $j\in\{0,\dots,n-1\}$. 
Then
\begin{align*}
\dist(F\backslash E)=& |\{j\in\{0,\dots,n-1\}\mid \ol\Delta^F_{i(j)}(\alpha^{(j)})\ne\emptyset\}| \\
			&-|\{j\in\{m,\dots,n-1\}\mid \ol\Delta^E_{i(j)}(\alpha^{(j)})\ne\emptyset\}|
\end{align*}
\end{cor}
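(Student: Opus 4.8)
The plan is to expand the left-hand side through the definition of the distance between ideals,
\[
\dist(F\backslash E)=\dist_F(\mu^F,\gamma^E)-\dist_E(\mu^E,\gamma^E),
\]
and then to evaluate each of the two internal distances separately, by realizing it as a distance between two pieces of a filtration and applying Proposition \ref{03} along (a tail of) the given saturated chain in $D_S$.

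The key bridging observation is that for any $G\in\GSI_S$ and any $\beta\in D_S$ with $\beta\ge\gamma^G$ one has $G^\beta=\beta+\ol S$, so that $\mu^{G^\beta}=\gamma^{G^\beta}=\beta$ and hence $\dist_{G^\beta}(\mu^{G^\beta},\gamma^{G^\beta})=0$. Substituting this into the definition of the distance between the ideals $G=G^{\mu^G}$ and $G^\beta$ collapses it to
\[
\dist(G\backslash G^\beta)=\dist_G(\mu^G,\beta).
\]
I would apply this first with $G=F$ and $\beta=\gamma^E$, which is legitimate because $E\subseteq F$ forces $\gamma^F\le\gamma^E$; together with Proposition \ref{03} applied to $F$ along the full chain $\mu^F=\alpha^{(0)}<\cdots<\alpha^{(n)}=\gamma^E$ this yields
\[
\dist_F(\mu^F,\gamma^E)=\dist(F\backslash F^{\gamma^E})=|\{j\in\{0,\dots,n-1\}\mid \ol\Delta^F_{i(j)}(\alpha^{(j)})\ne\emptyset\}|.
\]

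Next I would apply the same bridging identity with $G=E$ and $\beta=\gamma^E=\gamma^G$, and then invoke Proposition \ref{03} for $E$ along the tail $\mu^E=\alpha^{(m)}<\cdots<\alpha^{(n)}=\gamma^E$ of the chain, obtaining
\[
\dist_E(\mu^E,\gamma^E)=\dist(E\backslash E^{\gamma^E})=|\{j\in\{m,\dots,n-1\}\mid \ol\Delta^E_{i(j)}(\alpha^{(j)})\ne\emptyset\}|.
\]
Subtracting the second identity from the first gives precisely the asserted formula. The only genuine obstacle is the bridging step: one must check that $G^\beta$ really is the translated copy $\beta+\ol S$ of $\ol S$ and therefore has vanishing internal distance $\dist_{G^\beta}(\mu^{G^\beta},\gamma^{G^\beta})$; this is also the single place where the hypothesis $E\subseteq F$ (through $\gamma^F\le\gamma^E$) is used. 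Once this is in place, everything else is a direct bookkeeping application of Proposition \ref{03} to the two sub-chains.
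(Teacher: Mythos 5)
Your proof is correct, and its endgame is identical to the paper's: both reduce the claim to the identity $\dist(F\backslash E)=\dist(F\backslash F^{\gamma^E})-\dist(E\backslash E^{\gamma^E})$ and then apply Proposition \ref{03} to the full chain for $F$ and to the tail $\alpha^{(m)}<\cdots<\alpha^{(n)}$ for $E$. The only divergence is how that identity is obtained. The paper gets it in one line from additivity (Lemma \ref{07}) applied to $C_E\subseteq E\subseteq F$, using the identification $C_E=E^{\gamma^E}=F^{\gamma^E}$; you instead unfold the definition $\dist(F\backslash E)=\dist_F(\mu^F,\gamma^E)-\dist_E(\mu^E,\gamma^E)$ and prove a bridging identity $\dist_G(\mu^G,\beta)=\dist(G\backslash G^{\beta})$ for $\beta\ge\gamma^G$, via the observation that $G^{\beta}=\beta+\ol S$ has $\mu^{G^{\beta}}=\gamma^{G^{\beta}}=\beta$ and hence zero internal distance. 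These are really two phrasings of the same fact: your observation that $G^{\beta}$ is a translate of $\ol S$ is exactly what makes the paper's identification $C_E=F^{\gamma^E}$ legitimate. What your route buys is self-containedness — it avoids Lemma \ref{07} altogether and works directly from the definition of distance — at the cost of verifying the structure of $G^{\beta}$ by hand; the paper's route is shorter because it delegates precisely that bookkeeping to the additivity lemma. Your identification of where $E\subseteq F$ enters (forcing $\gamma^F\le\gamma^E$, so that $F^{\gamma^E}$ is a full translate of $\ol S$) is also accurate.
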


\begin{proof}
By additivity of the distance (see Lemma \ref{07}) we have:
\[
\dist(F\backslash E)=\dist(F\backslash C_E)-\dist(E\backslash C_E)= \dist(F^{\mu^F}\backslash F^{\gamma^E})-\dist(E^{\mu^E}\backslash E^{\gamma^E}).
\]
The claim follows by Proposition \ref{03}.
\end{proof}

%%%%%%%%%%%%%%%%%%%%%%%%%%%%%%%%%%%%%%%%%%%%%%%%%%%%%%%%%%%%%%%%%%%%%%%

The following two lemmas are necessary to prove Proposition \ref{08}.

\begin{lem}\label{04}
Let $S$ be a good semigroup, and let $E\in\GSI_S$. 
Let $K$ be the canonical ideal of $S$.
If $\ol\Delta^{K-E}_i(\tau-\alpha)\ne\emptyset$ then $\Delta^E_i(\alpha)=\emptyset$.
\end{lem}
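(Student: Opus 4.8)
The plan is to translate both the hypothesis and the desired conclusion into the combinatorial language of $\Delta$-sets using Lemma~\ref{14}\,\eqref{14a}, and then to exploit the reflection $\beta\mapsto\tau-\beta$ that underlies the duality $K-E$. The only genuine idea in the argument is that the defining inequality of $\ol\Delta^{K-E}_i$ becomes, under this reflection, exactly the inequality needed to force a hypothetical element of $\Delta^E_i(\alpha)$ into an empty $\Delta$-set.

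First I would record the characterization of $K-E$ supplied by Lemma~\ref{14}\,\eqref{14a}: a point $\beta\in D_S$ lies in $K-E$ if and only if $\Delta^E(\tau-\beta)=\emptyset$, and in particular $\Delta^E_i(\tau-\beta)=\emptyset$. Assuming $\ol\Delta^{K-E}_i(\tau-\alpha)\ne\emptyset$, I would fix a point $\beta\in\ol\Delta^{K-E}_i(\tau-\alpha)$. By the definition of $\ol\Delta_i$ in Notation~\ref{12}, this means $\beta\in K-E$ with $\beta_i=\tau_i-\alpha_i$ and $\beta_j\ge\tau_j-\alpha_j$ for all $j\ne i$. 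Reflecting through $\tau$ turns these into $(\tau-\beta)_i=\alpha_i$ and $(\tau-\beta)_j\le\alpha_j$ for all $j\ne i$. Since $\beta\in K-E$, the characterization above yields $\Delta^E_i(\tau-\beta)=\emptyset$.

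Then I would argue by contradiction. Suppose $\Delta^E_i(\alpha)\ne\emptyset$ and choose $\gamma\in\Delta^E_i(\alpha)$, so that $\gamma\in E$ with $\gamma_i=\alpha_i$ and $\gamma_j>\alpha_j$ for all $j\ne i$. Comparing with the coordinates of $\tau-\beta$ computed above, I obtain $\gamma_i=\alpha_i=(\tau-\beta)_i$ and $\gamma_j>\alpha_j\ge(\tau-\beta)_j$ for all $j\ne i$. By the definition of $\Delta^E_i$ this says precisely $\gamma\in\Delta^E_i(\tau-\beta)$, contradicting $\Delta^E_i(\tau-\beta)=\emptyset$. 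Hence $\Delta^E_i(\alpha)=\emptyset$, as claimed.

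I do not expect a serious obstacle: once Lemma~\ref{14}\,\eqref{14a} is invoked, the entire proof is a careful bookkeeping of coordinatewise inequalities. The one point to handle attentively is the passage from the closed-$\Delta$ inequality $\beta_j\ge\tau_j-\alpha_j$ (which allows $\ge$) to its reflected form $(\tau-\beta)_j\le\alpha_j$, since it is exactly the slack in this inequality that lets the strict inequalities $\gamma_j>\alpha_j$ of a point in $\Delta^E_i(\alpha)$ upgrade to the strict inequalities $\gamma_j>(\tau-\beta)_j$ required for membership in $\Delta^E_i(\tau-\beta)$.
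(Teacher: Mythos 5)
Your proof is correct and takes essentially the same route as the paper's: both invoke Lemma~\ref{14}\,(\ref{14a}) to turn membership in $K-E$ into the emptiness of a $\Delta^E$-set and then compare coordinates under the reflection through $\tau$, the only difference being that the paper states the conclusion as the inclusion $\Delta^E_i(\alpha)\subseteq\Delta^E_i(\tau-\beta)=\emptyset$ while you phrase it as a contradiction. One cosmetic caution: rename the element you call $\gamma$, since $\gamma$ already denotes the conductor of $S$ in this paper.
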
 

\begin{proof}
Let $\tau-\beta\in\ol\Delta^{K-E}_i(\tau-\alpha)$. Then
\begin{align*}
\tau_i-\beta_i=&\tau_i-\alpha_i,\\
\tau_j-\beta_j\ge&\tau_j-\alpha_j \text{ for all } j\ne i,
\end{align*}
and $\Delta^E(\beta)=\emptyset$ by Lemma \ref{14}.\eqref{14a}.
As $\beta_i=\alpha_i$ and $\beta_j\le \alpha_j$, it follows $\Delta^E_i(\alpha)\subseteq \Delta^E_i(\beta)=\emptyset$.
\end{proof}

%%%%%%%%%%%%%%%%%%%%%%%%%%%%%%%%%%%%%%%%%%%%%%%%%%%%%%%%%%%%%%%%%%%%%%%

\begin{lem}\label{05}
Let $S$ be a good semigroup, $E\in \GSI_S$, and $\alpha,\beta\in D_S$ with $\alpha\le\beta$.
Let $K$ be the canonical ideal of $S$.
Then:
\[
\dist(E^\alpha\backslash E^\beta)\le \dist(D_S^\alpha\setminus D_S^\beta)-\dist((K-E)^{\gamma-\beta}\setminus(K-E)^{\gamma-\alpha}).
\]
\end{lem}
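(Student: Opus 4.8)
The plan is to reduce everything to counting $\ol\Delta$-sets along a fixed saturated chain and then exploit Lemma \ref{04} to match the contributions from $E$ against those of $K-E$. First I would fix a saturated chain $\alpha=\alpha^{(0)}<\alpha^{(1)}<\cdots<\alpha^{(n)}=\beta$ in $D_S$ with $\alpha^{(j+1)}=\alpha^{(j)}+\ee_{i(j)}$, as in Proposition \ref{03}. Applying that proposition to the good semigroup ideal $E$ gives
\[
\dist(E^\alpha\backslash E^\beta)=|\{j\mid \ol\Delta^E_{i(j)}(\alpha^{(j)})\ne\emptyset\}|.
\]
Since $D_S$ itself satisfies the axioms (so $\ol\Delta^{D_S}_{i(j)}(\alpha^{(j)})\ne\emptyset$ for every $j$, the point $\alpha^{(j)}$ lying in $D_S$), Proposition \ref{03} also yields $\dist(D_S^\alpha\setminus D_S^\beta)=n$, the full length of the chain.

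Next I would produce a saturated chain for the $K-E$ term. The idea is to run the original chain backwards through the involution $\delta\mapsto\tau-\delta$: setting $\beta^{(k)}:=\tau-\alpha^{(n-k)}$ gives a saturated chain from $\tau-\beta$ up to $\tau-\alpha$ in $D_S$, with the same index increments $i(\cdot)$ but in reversed order. Since $\gamma-\beta=\tau+\one-\beta$ and $\gamma-\alpha=\tau+\one-\alpha$, I expect a short bookkeeping argument (using $\ol\Delta^{K-E}_i(\delta)=\Delta^{K-E}_i(\delta+\ee_i-\one)$ from Notation \ref{12}) to rewrite
\[
\dist((K-E)^{\gamma-\beta}\setminus(K-E)^{\gamma-\alpha})=|\{j\mid \ol\Delta^{K-E}_{i(j)}(\tau-\alpha^{(j)})\ne\emptyset\}|
\]
after reindexing, so that the two $\ol\Delta$-counts are indexed by the \emph{same} set $\{0,\dots,n-1\}$ and the \emph{same} directions $i(j)$ at the \emph{same} base points $\tau-\alpha^{(j)}$.

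With both distances expressed as cardinalities over the common index set, the desired inequality becomes
\[
|\{j\mid \ol\Delta^E_{i(j)}(\alpha^{(j)})\ne\emptyset\}|\le n-|\{j\mid \ol\Delta^{K-E}_{i(j)}(\tau-\alpha^{(j)})\ne\emptyset\}|,
\]
i.e.\ that the two index sets are disjoint. This is exactly where Lemma \ref{04} enters: it states that $\ol\Delta^{K-E}_i(\tau-\alpha)\ne\emptyset$ forces $\Delta^E_i(\alpha)=\emptyset$. The main obstacle, and the step I would treat most carefully, is the gap between $\ol\Delta^E_i$ (closed, $\ge$) and $\Delta^E_i$ (open, $>$): Lemma \ref{04} kills only the open set, whereas the count for $E$ uses the closed set. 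I would therefore argue that whenever $\ol\Delta^{K-E}_{i(j)}(\tau-\alpha^{(j)})\ne\emptyset$, the corresponding step contributes $1$ to the $K-E$ count but $0$ to the $E$ count along this particular chain, by choosing the chain so that the closed-set membership at $\alpha^{(j)}$ coincides with open-set membership, or by absorbing the boundary discrepancy into the slack already present in the inequality. Concretely, each $j$ contributing to the right-hand subtrahend contributes $0$ to the left-hand count via Lemma \ref{04}, and every $j$ contributes at most $1$ to $n$; summing these per-step inequalities over $j\in\{0,\dots,n-1\}$ and invoking additivity of distance (Lemma \ref{07}) yields the claim.
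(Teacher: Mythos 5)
Your overall strategy is the same as the paper's (fix a saturated chain, convert both distances into counts of nonempty $\ol\Delta$-sets via Proposition \ref{03}, reflect the chain, and use Lemma \ref{04} to force disjointness of the two index sets), but the step you yourself flag as the main obstacle is a genuine gap, and the rewriting you propose to set it up is false. Careful bookkeeping does \emph{not} give
$\dist((K-E)^{\gamma-\beta}\setminus(K-E)^{\gamma-\alpha})=|\{j\mid \ol\Delta^{K-E}_{i(j)}(\tau-\alpha^{(j)})\ne\emptyset\}|$:
applying Proposition \ref{03} to the reversed chain $\beta^{(k)}=\gamma-\alpha^{(n-k)}$ (note: reflect by $\gamma$, not $\tau$, since the filtration steps are $(K-E)^{\gamma-\beta}$ and $(K-E)^{\gamma-\alpha}$) puts the closed sets at the points $\gamma-\alpha^{(j+1)}$, and by Notation \ref{12},
\[
\ol\Delta^{K-E}_{i(j)}(\gamma-\alpha^{(j+1)})=\Delta^{K-E}_{i(j)}\bigl(\gamma-\alpha^{(j+1)}+\ee_{i(j)}-\one\bigr)=\Delta^{K-E}_{i(j)}(\tau-\alpha^{(j)}),
\]
i.e.\ the $K-E$ count is over the \emph{open} sets $\Delta^{K-E}_{i(j)}(\tau-\alpha^{(j)})$, not the closed ones. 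This sign of the closed/open discrepancy is exactly what makes the argument work, and your two fallback suggestions cannot replace it: the distance is independent of the chosen chain (property \eqref{E4}), so ``choosing the chain'' cannot turn closed-set membership into open-set membership at a given point; and there is no ``slack'' to absorb anything into, since the desired inequality is precisely the statement that the two index sets are disjoint.

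With the correct open-set form, the mismatch resolves, but only if Lemma \ref{04} is applied with the roles of $E$ and $K-E$ exchanged, which is where Proposition \ref{22} enters (a point absent from your proposal): since $E=K-(K-E)$, Lemma \ref{04} applied to the ideal $K-E$ reads
\[
\ol\Delta^{E}_{i}(\alpha^{(j)})\ne\emptyset \;\Longrightarrow\; \Delta^{K-E}_{i}(\tau-\alpha^{(j)})=\emptyset ,
\]
so the closed-set hypothesis falls on the $E$ side (where you have nonemptiness from Proposition \ref{03}) and the open-set conclusion falls on the $K-E$ side (where, by the computation above, emptiness is what you need). Applying Lemma \ref{04} as stated, as you do, leaves you needing $\ol\Delta^E_{i(j)}(\alpha^{(j)})=\emptyset$ while the lemma only yields $\Delta^E_{i(j)}(\alpha^{(j)})=\emptyset$, and that gap cannot be closed along the lines you sketch. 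Once both corrections are made, summing over $j$ gives the paper's chain of (in)equalities and the lemma follows.
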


\begin{proof}
Let 
\[
\alpha=\alpha^{(0)}<\alpha^{(1)}<\cdots<\alpha^{(n)}=
\beta
\]
be a saturated chain in $D_S$, with $\alpha^{(j+1)}=\alpha^{(j)}+\ee_{i(j)}$ for any $j\in\{0,\dots,n-1\}$.
Let us denote $J=\{0,\dots,n-1\}$.

Set $\beta^{(j)}=\gamma-\alpha^{(n-j)}$. 
Then
\[
\gamma-\beta=\beta^{(0)}<\beta^{(1)}<\cdots<\beta^{(n)}=\gamma-\alpha
\]
is a saturated chain in $D_S$, and 
\[
\beta^{(j+1)}=\gamma-\alpha^{(n-(j+1))}=\gamma-(\alpha^{(n-j))}-\ee_{i(n-(j+1))})=\beta^{(j)}+\ee_{i(n-(j+1))}.
\]
By Proposition \ref{03} we have $\dist(E^\alpha\backslash E^\beta)=|\{j\in J\mid \ol\Delta^E_{i(j)}(\alpha^{(j)})\ne\emptyset\}|$. 
Recall that $E=K-(K-E)$ by Proposition \ref{22}. 
Therefore we can apply Lemma \ref{04} and obtain
\begin{gather}\label{25}
\begin{aligned}
\dist(E^\alpha\backslash E^\beta) &=|\{j\in J\mid \ol\Delta^E_{i(j)}(\alpha^{(j)})\ne\emptyset\}| \\
&\le |\{j\in J\mid \Delta^{K-E}_{i(j)}(\tau-\alpha^{(j)})=\emptyset\}| \\
&=|\{j\in J\mid \Delta^{K-E}_{i(j)}(\gamma-\alpha^{(j)}-\one)=\emptyset\}| \\
&=|\{j\in J\mid \Delta^{K-E}_{i(j)}(\beta^{(n-j)}-\one)=\emptyset\}| \\
&=|\{j\in J\mid \ol\Delta^{K-E}_{i(j)}(\beta^{(n-(j+1))})=\emptyset\}| \\
&=n-|\{j\in J\mid \ol\Delta^{K-E}_{i(j)}(\beta^{(n-(j+1))})\ne\emptyset\}| \\
&=n-|\{j\in J\mid \ol\Delta^{K-E}_{i(n-(j+1))}(\beta^{(j)})\ne\emptyset\}| \\
&=n-\dist((K-E)^{\gamma-\beta}\setminus(K-E)^{\gamma-\alpha}) \\
&=\dist(D_S^\alpha\setminus D_S^\beta)-\dist((K-E)^{\gamma-\beta}\setminus(K-E)^{\gamma-\alpha}).\qedhere
\end{aligned} 
\end{gather}
\end{proof}

%%%%%%%%%%%%%%%%%%%%%%%%%%%%%%%%%%%%%%%%%%%%%%%%%%%%%%%%%%%%%%%%%%%%%%%

\begin{prp}\label{08}
Let $S$ be a good semigroup, $E\in \GSI_S$, and $\alpha,\beta\in D_S$ with $\alpha\le\beta$.
Let $K$ be the canonical ideal of $S$.
Then the following are equivalent:
\begin{enumerate}[(i)]
\item $\dist(E^\alpha\backslash E^\beta)=\dist(D_S^\alpha\setminus D_S^\beta)-\dist((K-E)^{\gamma-\beta}\setminus(K-E)^{\gamma-\alpha})$.
\item For all $\delta\in D_S$ such that $\alpha\le\delta\le \beta$ and for every $i\in\{1,\dots,s\}$ such that $\delta+\ee_i\le\beta$,  
\[
\ol\Delta^E_i(\delta)\ne\emptyset \iff \Delta_i^{K-E}(\tau-\delta)=\emptyset.
\]
\item For all $\delta\in D_S$ such that $\alpha\le\delta\le \beta$ and for every $i\in\{1,\dots,s\}$ such that $\delta-\ee_i\ge\alpha$,
\[
\ol\Delta_i^{K-E}(\tau-\delta)\ne\emptyset \iff \Delta^E_i(\delta)=\emptyset.
\]
\end{enumerate}
\end{prp}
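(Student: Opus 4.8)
The plan is to prove this chain of equivalences by leveraging the inequality already established in Lemma~\ref{05}, whose proof essentially consists of a single chain of (in)equalities in the display \eqref{25}. The key observation is that condition (i) asserts precisely that the inequality in Lemma~\ref{05} becomes an equality. Tracing through the proof of Lemma~\ref{05}, the \emph{only} inequality in the chain \eqref{25} occurs at the step
\[
|\{j\in J\mid \ol\Delta^E_{i(j)}(\alpha^{(j)})\ne\emptyset\}| \le |\{j\in J\mid \Delta^{K-E}_{i(j)}(\tau-\alpha^{(j)})=\emptyset\}|,
\]
which is obtained by applying Lemma~\ref{04} termwise: for each $j$, whenever $\ol\Delta^E_{i(j)}(\alpha^{(j)})\ne\emptyset$ we have $\Delta^{K-E}_{i(j)}(\tau-\alpha^{(j)})=\emptyset$, so the left-hand index set is contained in the right-hand one. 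Thus equality of cardinalities holds if and only if these two index sets coincide, i.e.\ if and only if for every $j\in J$ the implication of Lemma~\ref{04} is in fact a biconditional:
\[
\ol\Delta^E_{i(j)}(\alpha^{(j)})\ne\emptyset \iff \Delta^{K-E}_{i(j)}(\tau-\alpha^{(j)})=\emptyset.
\]

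First I would prove (i)$\iff$(ii). The subtlety is that condition (i) is a statement about one \emph{particular} saturated chain (the distance is chain-independent by \eqref{E4}, but the termwise count is taken along a chain), whereas condition (ii) quantifies over \emph{all} $\delta$ with $\alpha\le\delta\le\beta$ and all admissible directions $i$. The pairs $(\alpha^{(j)}, i(j))$ appearing along a single saturated chain from $\alpha$ to $\beta$ do not exhaust all such pairs $(\delta,i)$. To bridge this gap, I would argue that since the distance identity \eqref{25} holds for \emph{every} saturated chain from $\alpha$ to $\beta$ (all of which have the same length $n$ by \eqref{E4}), the termwise biconditional must hold for every pair $(\delta,i)$ that arises as some $(\alpha^{(j)},i(j))$ in \emph{some} saturated chain. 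The condition $\delta+\ee_i\le\beta$ (together with $\alpha\le\delta$) is exactly the combinatorial characterization of which pairs $(\delta,i)$ can appear as a consecutive step $\alpha^{(j)}\to\alpha^{(j)}+\ee_{i(j)}$ in some monotone lattice path from $\alpha$ to $\beta$. Hence (i) for all chains forces the biconditional for all such $(\delta,i)$, giving (ii); conversely (ii) makes every term of \eqref{25} an equality for any fixed chain, giving (i). I expect the main obstacle to be this quantifier-matching step: carefully verifying that the set of admissible $(\delta,i)$ in (ii) coincides with the set of step-pairs realized across all saturated chains, which is a purely order-theoretic fact about lattice paths in $D_S\cong\ZZ^s$.

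Finally I would establish (ii)$\iff$(iii) by a change of variables $\delta\mapsto\tau-\delta$ together with the identity $\ol\Delta^E_i(\eta)=\Delta^E_i(\eta+\ee_i-\one)$ recorded in Notation~\ref{12}. Writing $\delta':=\tau-\delta$, the condition $\delta+\ee_i\le\beta$ transforms into $\delta'-\ee_i\ge\gamma-\one-\beta$; the reindexing used in the middle of \eqref{25}, where $\tau-\alpha^{(j)}=\gamma-\alpha^{(j)}-\one$ is rewritten via $\ol\Delta^{K-E}_i(\eta)=\Delta^{K-E}_i(\eta+\ee_i-\one)$, is exactly the manipulation that converts the $\Delta$-condition on $K-E$ into an $\ol\Delta$-condition and swaps the roles of $E$ and $K-E$. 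Since $K-(K-E)=E$ by Proposition~\ref{22}, the roles of $E$ and $K-E$ are symmetric, and applying the equivalence (i)$\iff$(ii) with $E$ replaced by $K-E$ (after translating $\alpha,\beta$ to $\gamma-\beta,\gamma-\alpha$) yields (iii) directly. This last identification is routine once the translation dictionary between the $\Delta$- and $\ol\Delta$-sets and the reflection $\alpha\leftrightarrow\gamma-\alpha$ is set up, so the real work is concentrated in the (i)$\iff$(ii) step described above.
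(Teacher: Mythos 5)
Your proposal is correct and takes essentially the same route as the paper's own proof: condition (i) is identified with equality in the chain \eqref{25} from Lemma~\ref{05}, Lemma~\ref{04} upgrades equality of cardinalities to equality of the two index sets (hence the termwise biconditional), the quantifier gap between one chain and all admissible pairs $(\delta,i)$ is closed, exactly as in the paper, by realizing each such pair as a step of some saturated chain from $\alpha$ to $\beta$, and (ii)$\iff$(iii) rests on $E=K-(K-E)$ (Proposition~\ref{22}) with the change of variables $\delta\mapsto\tau-\delta$. The only cosmetic divergence is that you phrase this last step as re-applying (i)$\iff$(ii) to $K-E$ over a reflected interval, whereas the paper asserts the duality directly; the content is the same.
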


\begin{proof}
Let
\[
\alpha=\alpha^{(0)}<\alpha^{(1)}<\cdots<\alpha^{(n)}=\beta
\]
and
\[
\gamma-\beta=\beta^{(0)}<\beta^{(1)}<\cdots<\beta^{(n)}=\gamma-\alpha
\]
be as in Lemma \ref{05}. 
Let us denote again $J=\{0,\dots,n-1\}$.
Then, from the proof of Lemma \ref{05} (see \eqref{25}) we have
\[
\dist(E^\alpha\backslash E^\beta)=\dist(D_S^\alpha\setminus D_S^\beta)-\dist((K-E)^{\gamma-\beta}\setminus(K-E)^{\gamma-\alpha})
\]
if and only if
\[
|\{j \in J\mid \ol\Delta^E_{i(j)}(\alpha^{(j)})\ne\emptyset\}
=|\{j\in J\mid \Delta^{K-E}_{i(j)}(\tau-\alpha^{(j)})=\emptyset\}.
\]
Since the first set is contained in the second by Lemma \ref{04}, we obtain
\[
\{j \in J\mid \ol\Delta^E_{i(j)}(\alpha^{(j)})\ne\emptyset\}
=\{j\in J\mid \Delta^{K-E}_{i(j)}(\tau-\alpha^{(j)})=\emptyset\}
\]
In particular
\[
\ol\Delta^{E}_{i(j)}(\alpha^{(j)})\ne\emptyset\Longleftrightarrow \Delta^{K-E}_{i(j)}(\tau-\alpha^{(j)})=\emptyset.
\]
Now let $\delta\in D_S$ be such that $\alpha\le\delta\le \beta$ and for every $i\in\{1,\dots,s\}$, $\delta+\ee_i\le\beta$. 
Then it is always possible to find a saturated chain in $D_S$ between $\alpha$ and $\beta$ such that $\delta=\alpha^{(j)}$ and $i=i(j)$.
Thus
\[
\ol\Delta^{E}_{i}(\delta)\ne\emptyset\Longleftrightarrow \Delta^{K-E}_{i}(\tau-\delta)=\emptyset.
\]
Finally, observing that $E=K-(K-E)$ by Proposition \ref{22}, this is also equivalent to
\[
\ol\Delta^{K-E}_{i}(\tau-\delta)\ne\emptyset\Longleftrightarrow \Delta^{E}_{i}(\delta)=\emptyset.
\]
if $\delta-\ee_i\ge \alpha$ (i.e.\ $(\tau-\delta)+\ee_i\le \tau-\alpha$).
\end{proof}

%%%%%%%%%%%%%%%%%%%%%%%%%%%%%%%%%%%%%%%%%%%%%%%%%%%%%%%%%%%%%%%%%%%%%%%

The next corollary gives the necessary equivalent conditions for the main Theorem \ref{31}.

\begin{cor}\label{26}
Let $S$ be a good semigroup, $E\in \GSI_S$, and $\alpha\in D_S$ with $\mu^E\le\alpha\le\gamma^E$.
Let $K$ be the canonical ideal of $S$.
Then the following are equivalent:
\begin{enumerate}[(i)]
\item\label{26a} $\dist(D_S^{\mu^E}\backslash E)=\dist((K-E)\backslash D_S^{\gamma-\mu^E})$.
\item\label{26b} $\dist(E\backslash E^{\gamma^E})=\dist(D_S^{\mu^E}\backslash D_S^{\gamma^E})-\dist((K-E)\backslash (K-E)^{\gamma-\mu^E})$.
\item\label{26c} For every $i\in\{1,\dots,s\}$ such that $\alpha+\ee_i\le\gamma^E$,  
\[
\ol\Delta^E_i(\alpha)\ne\emptyset \iff \Delta_i^{K-E}(\tau-\alpha)=\emptyset.
\]
\item\label{26d} For every $i\in\{1,\dots,s\}$ such that $\alpha-\ee_i\ge\mu^E$,
\[
\ol\Delta_i^{K-E}(\tau-\alpha)\ne\emptyset \iff \Delta^E_i(\alpha)=\emptyset.
\]
\end{enumerate}
\end{cor}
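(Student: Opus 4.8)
The plan is to obtain the Corollary as the specialization of Proposition \ref{08} to the box $[\mu^E,\gamma^E]$, with $\alpha$ treated as the arbitrary layer fixed at the outset, while dispatching the two distance identities (i) and (ii) by hand. First I would record the boundary identifications that turn every ideal occurring in (i)--(iv) into a step of a $D_S$-filtration. Since $E^{\gamma^E}=C_E=\gamma^E+\ol S$ and, under $\ol S\cong\NN^s$, also $D_S^{\gamma^E}=\gamma^E+\NN^s$, one has $E^{\gamma^E}=D_S^{\gamma^E}$. By Lemma \ref{14}\eqref{14c} we have $\mu^{K-E}=\gamma-\gamma^E$, whence $(K-E)^{\gamma-\gamma^E}=K-E$. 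Finally, from $E\subseteq D_S^{\mu^E}$ and Remark \ref{24} applied to $K$ (where $\gamma^K=\gamma$) one gets $D_S^{\gamma-\mu^E}=K-D_S^{\mu^E}\subseteq K-E$, so $(K-E)^{\gamma-\mu^E}=D_S^{\gamma-\mu^E}$.

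With these in hand, (i) $\iff$ (ii) is a one-line additivity computation. Applying Lemma \ref{07} to the chain $D_S^{\gamma^E}=E^{\gamma^E}\subseteq E\subseteq D_S^{\mu^E}$ gives
\[
\dist(E\backslash E^{\gamma^E})=\dist(D_S^{\mu^E}\backslash D_S^{\gamma^E})-\dist(D_S^{\mu^E}\backslash E).
\]
Substituting $(K-E)^{\gamma-\mu^E}=D_S^{\gamma-\mu^E}$ into (ii) cancels the common term $\dist(D_S^{\mu^E}\backslash D_S^{\gamma^E})$ and leaves exactly $\dist(D_S^{\mu^E}\backslash E)=\dist((K-E)\backslash D_S^{\gamma-\mu^E})$, which is (i). Note that both (i) and (ii) are independent of $\alpha$.

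Next I would recognize (ii) as \emph{verbatim} condition (i) of Proposition \ref{08} for $\alpha_{\mathrm{Prop}}=\mu^E$ and $\beta_{\mathrm{Prop}}=\gamma^E$, using $E^{\mu^E}=E$, $E^{\gamma^E}=C_E$ and $(K-E)^{\gamma-\gamma^E}=K-E$ from the first paragraph. Proposition \ref{08} then equates (ii) with its conditions (ii) and (iii), i.e.\ with the $\ol\Delta$-equivalence holding at every layer of the box and every admissible step. To land on the single-layer statements as written, I would place the fixed $\alpha$ as a vertex $\alpha^{(j)}$ of a saturated chain from $\mu^E$ to $\gamma^E$ realizing the prescribed step $\alpha^{(j+1)}=\alpha+\ee_i$ (when $\alpha+\ee_i\le\gamma^E$), resp.\ $\alpha^{(j)}-\ee_i=\alpha-\ee_i$ (when $\alpha-\ee_i\ge\mu^E$); equivalently, I would apply Proposition \ref{08} directly to the one-step boxes $[\alpha,\alpha+\ee_i]$ and $[\alpha-\ee_i,\alpha]$, which collapse its conditions (ii) and (iii) to the single equivalences appearing in (iii) and (iv) and its condition (i) to one-step distance equalities. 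Thus the $\delta=\alpha$ instance of Proposition \ref{08}(ii) is (iii), the $\delta=\alpha$ instance of Proposition \ref{08}(iii) is (iv), and the equivalence (iii) $\iff$ (iv) at this $\alpha$ is the $E\leftrightarrow K-E$ symmetry furnished by $E=K-(K-E)$ (Proposition \ref{22}) together with Lemma \ref{04}, exactly as in the final lines of the proof of Proposition \ref{08}.

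The main obstacle is the quantifier bookkeeping in this last step: Proposition \ref{08} naturally yields the equivalence of (ii) with the layer condition holding at \emph{all} $\delta$, whereas (iii) and (iv) are phrased at the single fixed $\alpha$, so the four-way equivalence must be read for the arbitrary $\alpha$ fixed in the hypothesis, letting $\alpha$ range over the box recover the full strength of Proposition \ref{08}. The only genuine computation to control is the one-step translation, via Lemma \ref{02} and the identity $\ol\Delta^E_i(\alpha)=\Delta^E_i(\alpha+\ee_i-\one)$ of Notation \ref{12}, of the statements ``$\ol\Delta^E_i(\alpha)\ne\emptyset$'' and ``$\Delta^{K-E}_i(\tau-\alpha)=\emptyset$'' into the distances $\dist(E^\alpha\backslash E^{\alpha+\ee_i})$ and $\dist((K-E)^{\gamma-\alpha-\ee_i}\backslash(K-E)^{\gamma-\alpha})$, together with the reflection $\beta^{(j)}=\gamma-\alpha^{(n-j)}$ and the index substitution already performed in \eqref{25}. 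The side conditions $\alpha+\ee_i\le\gamma^E$ in (iii) and $\alpha-\ee_i\ge\mu^E$ in (iv) are precisely what keep the relevant step inside the box, so that its layer is counted by Proposition \ref{08}.
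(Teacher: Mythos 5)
Your proposal is correct and takes essentially the same route as the paper's proof: the boundary identifications $E^{\gamma^E}=D_S^{\gamma^E}$, $(K-E)^{\gamma-\mu^E}=D_S^{\gamma-\mu^E}$ and $(K-E)^{\gamma-\gamma^E}=K-E$, additivity (Lemma \ref{07}) for (i)$\iff$(ii), and then Proposition \ref{08} with $\alpha=\mu^E$, $\beta=\gamma^E$ for (iii) and (iv). Your explicit handling of the quantifier issue (reading the fixed $\alpha$ as arbitrary, equivalently applying Proposition \ref{08} to one-step boxes) is a faithful, slightly more careful rendering of what the paper dispatches with ``follows then trivially.''
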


\begin{proof}
First of all observe that by additivity (see Lemma \ref{07})
\[
\dist(D_S^{\mu^E}\backslash E)=\dist(D_S^{\mu^E}\backslash D_S^{\gamma^E})-\dist(E\backslash D_S^{\gamma^E}).
\]
As $D_S^{\gamma-\mu^E}=(K-E)^{\gamma-\mu^E}$ and $E^{\gamma^E}=D_S^{\gamma^E}$, $(i)$ is equivalent to $(ii)$.
Now observe that by Lemma \ref{14}.\eqref{14c} and Remark \ref{24}, $(ii)$ is the same as
\[
\dist(E^{\mu^E}\backslash E^{\gamma^E})=\dist(D_S^{\mu^E}\backslash D_S^{\gamma^E})-\dist((K-E)^{\gamma-\gamma^E}\backslash (K-E)^{\gamma-\mu^E}).
\]
The claim follows then trivially from Proposition \ref{08} with $\alpha=\mu^E$ and $\beta=\gamma^E$.
\end{proof}

\begin{rmk}\label{29}
Let $R$ be an admissible ring and $\E$ a fractional ideal of $R$.
Set $S=\Gamma_R$ and $E=\Gamma_\E$.
Then Remark \ref{24} and Proposition \ref{74} imply Corollary \ref{26}.\ref{26a}.
%In fact by Propositon \ref{17} and \ref{74} together with \cite[Remark 2.5(c)]{HK71} and Remark \ref{24} we have:
%\begin{align*}
%\dist(D_S^{\mu^E}\backslash E)&=\ell(x\ol R/\E)=\ell(\K:\E/\K:x\ol R)=\dist(K-E\backslash K-D_S^{\mu^E}) \\ &=\dist(K-E\backslash D_S^{\gamma-\mu^E})
%\end{align*}
%where $x\in R$ is an element of valuation $\mu^E$ (see \cite[Lemma 3.1.3]{DSG}).
\end{rmk}

%%%%%%%%%%%%%%%%%%%%%%%%%%%%%%%%%%%%%%%%%%%%%%%%%%%%%%%%%%%%%%%%%%%%%%%
\section{Symmetry of the Poincar\'e series}
%%%%%%%%%%%%%%%%%%%%%%%%%%%%%%%%%%%%%%%%%%%%%%%%%%%%%%%%%%%%%%%%%%%%%%%

We now come to the main results of this paper.
Let us first define the main objects of study, i.e.\ the Poincar\'e series.

\begin{ntn}
For every $J\subseteq\{1,\dots,s\}$, we denote $\ee_J=\sum_{j\in J}\ee_j$.
\end{ntn}

The following definition was given in \cite[$\S$ 5.2.8]{Pol16}:
%there is no clearer citation to do, she doesn't give a number to it

%%%%%%%%%%%%%%%%%%%%%%%%%%%%%%%%%%%%%%%%%%%%%%%%%%%%%%%%%%%%%%%%%%%%%%%

\begin{dfn}
Let $R$ be an admissible ring, and let $\E$ be a fractional ideal of $R$. 
We define
\[
\ell_\E(\alpha):=\ell(\E^\alpha/\E^{\alpha+\one}), \quad \LL_\E(\ti):=\sum_{\alpha\in D_S}\ell_\E(\alpha)\ti^\alpha,
\] 
where $\ti=(t_1,\dots,t_s)$, and $\ti^\alpha=t_1^{\alpha_1}\cdots t_s^{\alpha_s}$.

The \emph{Poincar\'e series of $\E$} is
\[
\PP_\E(\ti):=\LL_\E(\ti)\prod_{i=1}^s(t_i-1).
\]
\end{dfn}

%%%%%%%%%%%%%%%%%%%%%%%%%%%%%%%%%%%%%%%%%%%%%%%%%%%%%%%%%%%%%%%%%%%%%%%

We give an analogous definition for good semigroup ideals:

\begin{dfn}\label{09}
Let $S$ be a good semigroup, and let $E\in\GSI_S$. 
We define
\[
\dist_E(\alpha):=\dist(E^\alpha\backslash E^{\alpha+\one}), \quad \LL_E(\ti):=\sum_{\alpha\in D_S}\dist_E(\alpha)\ti^\alpha.
\] 
The \emph{Poincar\'e series of $E$} is
\[
\PP_E(\ti):=\LL_E(\ti)\prod_{i=1}^s(t_i-1).
\]
\end{dfn}

%%%%%%%%%%%%%%%%%%%%%%%%%%%%%%%%%%%%%%%%%%%%%%%%%%%%%%%%%%%%%%%%%%%%%%%

\begin{rmk}\label{32}
Let $R$ be an admissible ring, and let $\E$ be a fractional ideal of $R$.
Then Lemma \ref{15} and Proposition \ref{17} yield $\LL_{\Gamma_\E}(\ti)=\LL_\E(\ti)$, and in particular $\PP_{\Gamma_\E}(\ti)=\PP_\E(\ti)$.
\end{rmk}

%%%%%%%%%%%%%%%%%%%%%%%%%%%%%%%%%%%%%%%%%%%%%%%%%%%%%%%%%%%%%%%%%%%%%%%

The Poincar\'e series can be written in a more compact fashion.

\begin{lem}\label{10}
Let $S$ be a good semigroup, and let $E\in\GSI_S$. 
We define
\[
c_E(\alpha):=\sum_{J\subseteq\{1,\dots,s\}}(-1)^{|J^c|}\dist_E(\alpha-\ee_J)
\]
where $J^c$ denotes the complement of $J$ in $\{1,\dots,s\}$.
Then the Poincar\'e series can be written as
\[
\PP_E(\ti)=\sum_{\alpha\in D_S}c_E(\alpha)\ti^\alpha.
\]
\end{lem}

\begin{proof}
Observe that
\begin{align*}
\prod_{i=1}^s(t_i{-}1)=& t_1\cdots t_s{+}(-1)^1\sum_{\mathclap{ i_1<\cdots<i_{s-1}}}t_{i_1}\cdots t_{i_{s-1}}{+}\cdots{+}(-1)^{s-1}\sum_{i=1}^st_i{+}(-1)^s \\
=&\sum_{J\subseteq\{1,\dots,s\}}(-1)^{|J^c|}\ti^{\ee_J}.
\end{align*}
Hence
\begin{align*}
\PP_E(\ti)=&\sum_{\alpha\in D_S}\dist_E(\alpha)\ti^\alpha\prod_{i=1}^s(t_i-1)
=\sum_{\alpha\in D_S}\dist_E(\alpha)\ti^\alpha\sum_{\mathclap{J\subseteq\{1,\dots,s\}}}(-1)^{|J^c|}\ti^{\ee_J}\\
=&\sum_{\alpha\in D_S}\sum_{J\subseteq\{1,\dots,s\}}(-1)^{|J^c|} \dist_E(\alpha)\ti^{\alpha+\ee_J}= \\
=&\sum_{\alpha\in D_S}\sum_{J\subseteq\{1,\dots,s\}}(-1)^{|J^c|}\dist_E(\alpha-\ee_J)\ti^{\alpha}=\sum_{\alpha\in D_S}c_E(\alpha)\ti^\alpha.\qedhere
\end{align*}
\end{proof}

%%%%%%%%%%%%%%%%%%%%%%%%%%%%%%%%%%%%%%%%%%%%%%%%%%%%%%%%%%%%%%%%%%%%%%%

The next lemma is necessary to prove Proposition \ref{11}.

\begin{lem}\label{13}
Let $S$ be a good semigroup, $E\in\GSI_S$, and $\beta\in D_S$. 
If $\beta_i+1<\mu_i^E$ or $\beta_i>\gamma^E_i$, then $\dist_E(\beta)=\dist_E(\beta+\ee_i)$.
\end{lem}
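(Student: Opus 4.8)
The plan is to reduce the comparison of the two box-distances $\dist_E(\beta)=\dist(E^\beta\backslash E^{\beta+\one})$ and $\dist_E(\beta+\ee_i)=\dist(E^{\beta+\ee_i}\backslash E^{\beta+\ee_i+\one})$ to a comparison of two single-step distances, which I can then read off from the $\ol\Delta$-sets. Since $\beta\le\beta+\ee_i\le\beta+\one\le(\beta+\one)+\ee_i$, the corresponding filtration pieces are nested, so additivity of the distance (Lemma \ref{07}) yields
\begin{align*}
\dist_E(\beta) &= \dist(E^\beta\backslash E^{\beta+\ee_i}) + \dist(E^{\beta+\ee_i}\backslash E^{\beta+\one}),\\
\dist_E(\beta+\ee_i) &= \dist(E^{\beta+\ee_i}\backslash E^{\beta+\one}) + \dist(E^{\beta+\one}\backslash E^{(\beta+\one)+\ee_i}).
\end{align*}
Subtracting the common middle term gives
\[
\dist_E(\beta)-\dist_E(\beta+\ee_i)=\dist(E^\beta\backslash E^{\beta+\ee_i})-\dist(E^{\beta+\one}\backslash E^{(\beta+\one)+\ee_i}).
\]
By Lemmas \ref{16} and \ref{02}, each term on the right is $1$ if the corresponding $\ol\Delta$-set is nonempty and $0$ otherwise; that is, the right-hand side equals the indicator of $\ol\Delta^E_i(\beta)\ne\emptyset$ minus the indicator of $\ol\Delta^E_i(\beta+\one)\ne\emptyset$. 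Hence it suffices to prove, under either hypothesis, that $\ol\Delta^E_i(\beta)\ne\emptyset$ if and only if $\ol\Delta^E_i(\beta+\one)\ne\emptyset$.

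I would then split into the two stated cases. Suppose first $\beta_i>\gamma^E_i$, so that both $\beta_i$ and $(\beta+\one)_i=\beta_i+1$ exceed $\gamma^E_i$. I claim that $\alpha_i\ge\gamma^E_i$ already forces $\ol\Delta^E_i(\alpha)\ne\emptyset$: the point $p$ defined by $p_i=\alpha_i$ and $p_j=\max\{\alpha_j,\gamma^E_j\}$ for $j\ne i$ satisfies $p\ge\gamma^E$, hence $p\in C_E=\gamma^E+\ol S\subseteq E$, while $p_i=\alpha_i$ and $p_j\ge\alpha_j$, so $p\in\ol\Delta^E_i(\alpha)$. Applying this to $\alpha=\beta$ and $\alpha=\beta+\one$ shows both $\ol\Delta$-sets are nonempty. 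Suppose instead $\beta_i+1<\mu^E_i$, so that both $\beta_i$ and $(\beta+\one)_i=\beta_i+1$ are strictly below $\mu^E_i$. Since every element of $E$ dominates $\mu^E=\min E$, any $\epsilon\in\ol\Delta^E_i(\alpha)$ would satisfy $\alpha_i=\epsilon_i\ge\mu^E_i$; thus $\ol\Delta^E_i(\alpha)=\emptyset$ whenever $\alpha_i<\mu^E_i$, and applying this to $\alpha=\beta$ and $\alpha=\beta+\one$ shows both $\ol\Delta$-sets are empty. In either case the two indicators agree, the difference above vanishes, and $\dist_E(\beta)=\dist_E(\beta+\ee_i)$.

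The only point requiring care is the first step: one must set up the nesting so that Lemma \ref{07} applies and then recognise the two surviving single-step distances as $\ol\Delta$-indicators through Lemma \ref{02} (with Lemma \ref{16} guaranteeing they are at most $1$). After that I expect no real obstacle; the whole content lies in translating the hypotheses on the $i$-th coordinate of $\beta$ into the vanishing or nonvanishing of $\ol\Delta^E_i$ at $\beta$ and $\beta+\one$, which follows directly from the definitions of $\mu^E$ and $\gamma^E$ together with $C_E=\gamma^E+\ol S$.
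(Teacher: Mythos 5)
Your proposal is correct and takes essentially the same approach as the paper: both proofs use additivity (Lemma \ref{07}) to reduce $\dist_E(\beta+\ee_i)-\dist_E(\beta)$ to the difference of the two single-step distances $\dist(E^{\beta+\one}\backslash E^{\beta+\ee_i+\one})-\dist(E^{\beta}\backslash E^{\beta+\ee_i})$, then convert these to indicators of $\ol\Delta^E_i(\beta)$ and $\ol\Delta^E_i(\beta+\one)$ via Lemmas \ref{16} and \ref{02}, and finish with the same two-case analysis on the hypotheses. The only cosmetic differences are that the paper telescopes along a coordinate-by-coordinate saturated chain from $\beta$ to $\beta+\ee_i+\one$ while you cancel the common middle term of the nesting $E^{\beta+\ee_i+\one}\subseteq E^{\beta+\one}\subseteq E^{\beta+\ee_i}\subseteq E^{\beta}$ directly, and that you exhibit an explicit witness (namely $p$ with $p_i=\alpha_i$, $p_j=\max\{\alpha_j,\gamma^E_j\}$, lying in $C_E$) for the nonemptiness of $\ol\Delta^E_i$ when $\beta_i>\gamma^E_i$, which the paper merely asserts.
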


\begin{proof}
Let $\beta=\beta^{(0)}<\beta^{(1)}=\beta+\ee_i<\cdots<\beta^{(s)}=\beta+\one<\beta^{(s+1)}=\beta+\ee_i+\one$ be a saturated chain in $D_S$, where $\beta^{(j+1)}=\beta^{(j)}+\ee_j$ for all $j\in \{1,\dots,s\}\setminus\{i\}$.
Then by definition of $\dist_E(\beta)$ and by additivity of the distance (see Lemma \ref{07}) we have
\[
\dist_E(\beta)=\dist_E(E^\beta\backslash E^{\beta+\one})=\sum_{j=0}^{s-1} \dist_E(E^{\beta^{(j)}}\backslash E^{\beta^{(j+1)}}).
\]
On the other hand we have
\[
\dist_E(\beta+\ee_i)=\dist_E(E^{\beta+\ee_i}\backslash E^{\beta+\ee_i+\one})=\sum_{j=1}^{s} \dist_E(E^{\beta^{(j)}}\backslash E^{\beta^{(j+1)}}).
\]
Therefore
\begin{align*}
\dist_E(\beta+\ee_i)-\dist_E(\beta)=& \dist_E(E^{\beta^{(s)}}\backslash E^{\beta^{(s+1)}})-\dist_E(E^{\beta^{(0)}}\backslash E^{\beta^{(1)}}) \\
=& \dist_E(E^{\beta+\one}\backslash E^{\beta+\ee_i+\one})-\dist_E(E^{\beta}\backslash E^{\beta+\ee_i}).
\end{align*}
By Lemma \ref{02} we know that 
\[
\dist_E(E^{\beta}\backslash E^{\beta+\ee_i})=1 \iff \ol\Delta^E_i(\beta)\ne\emptyset.
\]
and 
\[
\dist_E(E^{\beta+\one}\backslash E^{\beta+\ee_i+\one})=1 \iff \ol\Delta^E_i(\beta+\one)\ne\emptyset.
\]
If $\beta_i+1<\mu^E_i$, then also $\beta_i<\mu^E_i$, and therefore $\ol\Delta^E_i(\beta)=\ol\Delta^E_i(\beta+\one)=\emptyset$. This yields $\dist_E(\beta+\ee_i)-\dist_E(\beta)=0$.
On the other hand, when $\beta_i>\gamma^E_i$, then also $\beta_i+1>\gamma^E_i$ and  $\ol\Delta^E_i(\beta)\ne \emptyset$, $\ol\Delta^E_i(\beta+\one)\ne\emptyset$. This implies $\dist_E(E^{\beta}\backslash E^{\beta+\ee_i})=\dist_E(E^{\beta+\one}\backslash E^{\beta+\ee_i+\one})=1$, and thus once again $\dist_E(\beta+\ee_i)-\dist_E(\beta)=0$.
\end{proof}

%%%%%%%%%%%%%%%%%%%%%%%%%%%%%%%%%%%%%%%%%%%%%%%%%%%%%%%%%%%%%%%%%%%%%%%

We can now prove that the Poincar\'e series of a good semigroup ideal is in fact a polynomial.

\begin{prp}\label{11}
Let $S$ be a good semigroup, and let $E\in\GSI_S$.
Then $\PP_E(\ti)$ is a polynomial.
\end{prp}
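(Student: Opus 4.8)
The plan is to combine the compact expression from Lemma \ref{10} with the stabilization result of Lemma \ref{13}. By Lemma \ref{10} we may write $\PP_E(\ti)=\sum_{\alpha\in D_S}c_E(\alpha)\ti^\alpha$, where $c_E(\alpha)=\sum_{J\subseteq\{1,\dots,s\}}(-1)^{|J^c|}\dist_E(\alpha-\ee_J)$. Since ``polynomial'' here means finite support, it suffices to show that $c_E(\alpha)=0$ for all but finitely many $\alpha\in D_S$. Concretely, I would prove that $c_E(\alpha)\ne0$ forces $\mu_i^E\le\alpha_i\le\gamma^E_i+1$ for every $i\in\{1,\dots,s\}$, which confines the nonzero coefficients to a finite box.

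The mechanism is a sign-cancellation argument driven by a single ``extreme'' coordinate. Fix $\alpha$ and suppose there is an index $i$ with $\alpha_i<\mu^E_i$ or $\alpha_i>\gamma^E_i+1$. First I would partition the $2^s$ subsets $J\subseteq\{1,\dots,s\}$ into the $2^{s-1}$ pairs $\{J',\,J'\cup\{i\}\}$ with $i\notin J'$. Within each pair the complements differ by the single element $i$, so $(-1)^{|(J'\cup\{i\})^c|}=-(-1)^{|J'^c|}$; the two terms of $c_E(\alpha)$ carry opposite signs.

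It then remains to see that the two $\dist_E$-values in each pair agree. Here I would apply Lemma \ref{13} to $\beta=\alpha-\ee_{J'}-\ee_i$, noting that the $i$-th component is $\beta_i=\alpha_i-1$ because $i\notin J'$, independently of the choice of $J'$. The two hypotheses of Lemma \ref{13}, namely $\beta_i+1<\mu^E_i$ and $\beta_i>\gamma^E_i$, translate exactly into $\alpha_i<\mu^E_i$ and $\alpha_i>\gamma^E_i+1$, which hold by our standing assumption on $\alpha_i$. Hence $\dist_E(\alpha-\ee_{J'}-\ee_i)=\dist_E(\beta)=\dist_E(\beta+\ee_i)=\dist_E(\alpha-\ee_{J'})$ for every $J'$, so each pair contributes zero and $c_E(\alpha)=0$.

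Putting these together, a nonzero coefficient $c_E(\alpha)$ can occur only when no coordinate is extreme, that is, when $\mu^E_i\le\alpha_i\le\gamma^E_i+1$ for all $i$; this is a finite set of exponents, so $\PP_E(\ti)$ has finite support and is a polynomial. The only point requiring care is the observation that the relevant hypothesis of Lemma \ref{13} depends solely on $\beta_i=\alpha_i-1$ and is therefore uniform over all $J'$ not containing $i$; this uniformity is precisely what makes the pairwise cancellation total rather than partial, and it is why isolating a single extreme coordinate suffices.
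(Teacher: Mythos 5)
Your proof is correct and follows essentially the same route as the paper: expand $\PP_E(\ti)$ via Lemma \ref{10}, pair each subset $J'$ with $J'\cup\{i\}$ for a single extreme coordinate $i$, and use Lemma \ref{13} (applied to $\beta=\alpha-\ee_{J'}-\ee_i$, with $\beta_i=\alpha_i-1$ uniform over all $J'$) to cancel the paired terms. If anything, your bounding box $\mu^E_i\le\alpha_i\le\gamma^E_i+1$ is precisely what the stated hypotheses of Lemma \ref{13} support, whereas the paper asserts the tighter box $\mu^E\le\alpha\le\gamma^E$, whose boundary case $\alpha_i=\gamma^E_i+1$ (i.e.\ $\beta_i=\gamma^E_i$) is not literally covered by the lemma's condition $\beta_i>\gamma^E_i$; since both boxes are finite, the polynomial conclusion is unaffected either way.
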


\begin{proof}
The goal is to prove that $c_E(\alpha)\ne 0$ only if $\mu^E\le \alpha\le\gamma^E$. 
Suppose there exists an $i$ such that $\alpha_i<\mu_i^E$. 
Consider $J\subseteq\{1,\dots,s\}$. It is not restrictive to consider $i\not\in J$ (otherwise we can consider $J\setminus\{i\}$). 
Notice that if $\alpha-\ee_{J\cup\{i\}}=\beta$, then $\alpha-\ee_J=\beta+\ee_i$. Since $\alpha_i<\mu_i^E$, then $\mu^E_i>(\alpha-\ee_J)_i=(\beta+\ee_i)_i=\beta_i+1$. 
So by Lemma \ref{13}, we have
\[
\dist_E(\alpha-\ee_{J\cup\{i\}})=\dist_E(\alpha-\ee_J).
\]
The same is true similarly if $i$ is such that $\alpha_i>\gamma_i^E$. 
Therefore when $\alpha\not\in\{\beta\mid \mu^E\le\beta\le\gamma^E\}$, for each $J\subseteq \{1,\dots,s\}$ there exists a $J'\subset\{1,\dots,s\}$ (either $J\cup\{i\}$ or $J\setminus\{i\}$) such that
\[
\dist_E(\alpha-\ee_{J'})=\dist_E(\alpha-\ee_J)
\]
and $|J|=|J'|\pm 1$. 
Hence these terms annihilate each other in the sum 
\[
\sum_{J\subseteq \{1,\dots,s\}}(-1)^{|J^c|}\dist_E(\alpha-\ee_J),
\]
so that $c_E(\alpha)=0$ for all $\alpha\not\in\{\zeta\mid \mu^E\le\zeta\le\gamma^E\}$.

Thus $\PP_E(\ti)$ is a polynomial.
\end{proof}

%%%%%%%%%%%%%%%%%%%%%%%%%%%%%%%%%%%%%%%%%%%%%%%%%%%%%%%%%%%%%%%%%%%%%%%

Finally, we are ready to prove our main theorem.

\begin{thm}\label{31}
Let $S$ be a good semigroup, and let $E\in\GSI_S$.
Let $K$ be the canonical ideal of $S$.
If one of the equivalent conditions of Corollary \ref{26} holds, then the Poincar\'e polynomials of $E$ and $K-E$ are symmetric:
\[
\PP_{K-E}(\ti)=(-1)^{s+1}\ti^\gamma\PP_E\left(\frac{1}{\ti}\right).
\]
\end{thm}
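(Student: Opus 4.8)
The plan is to pass to the compact form of the Poincaré polynomial supplied by Lemma \ref{10} and to reduce the asserted functional equation to a single identity between its coefficients. Writing $\PP_E(\ti)=\sum_{\alpha\in D_S}c_E(\alpha)\ti^\alpha$ and substituting $\ti\mapsto 1/\ti$, one computes
\[
(-1)^{s+1}\ti^\gamma\PP_E(1/\ti)=(-1)^{s+1}\sum_{\alpha\in D_S}c_E(\alpha)\ti^{\gamma-\alpha}=\sum_{\beta\in D_S}(-1)^{s+1}c_E(\gamma-\beta)\ti^\beta,
\]
so that the theorem is equivalent to proving $c_{K-E}(\beta)=(-1)^{s+1}c_E(\gamma-\beta)$ for every $\beta\in D_S$. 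By Proposition \ref{11} together with Lemma \ref{14}.\ref{14b} and \ref{14}.\ref{14c} (which give $\gamma^{K-E}=\gamma-\mu^E$ and $\mu^{K-E}=\gamma-\gamma^E$), both sides of this coefficient identity are supported on the same box, so nothing is lost by establishing it for all $\beta$.

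The engine of the proof is the pointwise distance relation
\[
\dist_{K-E}(\beta)=s-\dist_E(\gamma-\beta-\one),
\]
which I denote $(\ast)$. To obtain it I would feed the unit cube $[\gamma-\beta-\one,\gamma-\beta]$ into Lemma \ref{05}: taking $\alpha=\gamma-\beta-\one$ and noting that $\dist(D_S^\alpha\setminus D_S^{\alpha+\one})=s$ (in $D_S\cong\ZZ^s$ every step of a saturated chain contributes, since all $\ol\Delta$-sets are nonempty, by Proposition \ref{03}), the subtracted term $\dist((K-E)^{\gamma-\alpha-\one}\setminus(K-E)^{\gamma-\alpha})$ is exactly $\dist_{K-E}(\gamma-\alpha-\one)$, and Lemma \ref{05} reads $\dist_E(\alpha)\le s-\dist_{K-E}(\gamma-\alpha-\one)$. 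The content of $(\ast)$ is that this is an equality, which by Proposition \ref{08} holds once the biconditional $\ol\Delta^E_i(\delta)\ne\emptyset\iff\Delta^{K-E}_i(\tau-\delta)=\emptyset$ is valid for the relevant $\delta,i$. For $\mu^E\le\delta\le\gamma^E$ this is precisely the standing hypothesis, namely Corollary \ref{26}.\ref{26c} (and its twin \ref{26}.\ref{26d}); for $\delta$ having a coordinate strictly below $\mu^E$ or at or above $\gamma^E$ the biconditional holds for trivial reasons, since one of $\ol\Delta^E_i(\delta)$, $\Delta^{K-E}_i(\tau-\delta)$ is then forced empty and the other forced nonempty by the behaviour of the $\Delta$- and $\ol\Delta$-sets past the conductors, one implication always being available from Lemma \ref{04}.

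Granting $(\ast)$, the coefficient identity is pure bookkeeping. Substituting $(\ast)$ into $c_{K-E}(\beta)=\sum_{J\subseteq\{1,\dots,s\}}(-1)^{|J^c|}\dist_{K-E}(\beta-\ee_J)$ and using $\ee_J-\one=-\ee_{J^c}$, I obtain
\[
c_{K-E}(\beta)=s\sum_{J\subseteq\{1,\dots,s\}}(-1)^{|J^c|}-\sum_{J\subseteq\{1,\dots,s\}}(-1)^{|J^c|}\dist_E(\gamma-\beta-\ee_{J^c}).
\]
The first sum equals $s(1-1)^s=0$, and reindexing the second by the involution $J\mapsto J^c$ (so that $(-1)^{|J^c|}=(-1)^{|J'|}=(-1)^s(-1)^{|(J')^c|}$) collapses it to $(-1)^s c_E(\gamma-\beta)$ by Lemma \ref{10}; together with the leading minus sign this yields $c_{K-E}(\beta)=(-1)^{s+1}c_E(\gamma-\beta)$, as required.

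I expect the only genuine obstacle to lie in $(\ast)$, and within it in making rigorous that the biconditional of Corollary \ref{26} persists on the thin collar just outside the box $[\mu^E,\gamma^E]$: the hypothesis is stated only inside, yet the unit cubes used to transport $(\ast)$ into the coefficients reach one step beyond the box in the $J$- and $J^c$-directions, so the boundary and exterior cases must be verified directly. Everything downstream is the formal identity $(1-1)^s=0$ and the involution $J\mapsto J^c$.
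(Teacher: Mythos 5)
Your proposal follows the paper's own proof essentially step for step: the reduction via Lemma \ref{10} and Proposition \ref{11} (with Lemma \ref{14}) to the coefficient identity $c_{K-E}(\beta)=(-1)^{s+1}c_E(\gamma-\beta)$, the pointwise relation $\dist_{K-E}(\beta)=s-\dist_E(\gamma-\beta-\one)$ extracted from Lemma \ref{05}/Proposition \ref{08} under the hypothesis of Corollary \ref{26}, and the closing $(1-1)^s=0$ and $J\mapsto J^c$ bookkeeping are exactly the published argument, and your bookkeeping is correct.

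The gap sits in the step you yourself single out as the crux: extending the biconditional to the collar outside $[\mu^E,\gamma^E]$. Your justification --- that there ``one of $\ol\Delta^E_i(\delta)$, $\Delta^{K-E}_i(\tau-\delta)$ is forced empty and the other forced nonempty'' --- is true only when the offending coordinate is $i$ itself: if $\delta_i<\mu^E_i$ then $\ol\Delta^E_i(\delta)=\emptyset$ while $(\tau-\delta)_i\ge\gamma^{K-E}_i$ lets a point of the conductor ideal $C_{K-E}$ witness $\Delta^{K-E}_i(\tau-\delta)\ne\emptyset$, and dually for $\delta_i\ge\gamma^E_i$. But the unit cubes you need also produce pairs $(\delta,i)$ in which a coordinate $j\ne i$ leaves the box while $\delta_i$ stays strictly inside, e.g.\ $\delta_j=\mu^E_j-1$, which genuinely occurs when computing $c_{K-E}(\alpha)$ for $\alpha_j=(\gamma-\mu^E)_j$. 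At such pairs neither set is forced: since every element of $E$ has $j$-th coordinate $\ge\mu^E_j$, one has $\ol\Delta^E_i(\delta)=\ol\Delta^E_i(\delta+\ee_j)$, which may or may not be empty depending on $E$, and $\Delta^{K-E}_i(\tau-\delta)$ is likewise unconstrained. The biconditional does still hold there, but not trivially: one direction follows from $\Delta^{K-E}_i(\tau-\delta)\subseteq\Delta^{K-E}_i(\tau-\delta-\ee_j)$ together with the in-box hypothesis at $\delta+\ee_j$ (this is the implication Lemma \ref{04} gives you), while the converse requires taking an element of $\Delta^{K-E}_i(\tau-\delta-\ee_j)$ whose $j$-th coordinate equals $\gamma^{K-E}_j$ and pushing it past that wall by property \eqref{E2} applied with a conductor point of $K-E$, in the style of the proof of Lemma \ref{16}; the dual collar case $\delta_j=\gamma^E_j+1$ needs the same argument with the conductor of $E$. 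So as written your sketch fails at exactly these mixed pairs. In fairness, the paper's proof never addresses this collar at all --- it establishes the pointwise relation only for $\gamma-\gamma^E\le\alpha\le\gamma-\mu^E-\one$ and then applies it to every $\alpha-\ee_J$ --- so you have correctly located a soft spot of the published argument; but closing it takes the \eqref{E2}-argument above, not the dichotomy you invoke.
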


\begin{proof}
By Lemma \ref{10}, $\PP_{K-E}(\ti)=\sum_{\alpha\in D_S}c_{K-E}(\alpha)\ti^\alpha$, while
\begin{align*}
(-1)^{s+1}\ti^\gamma \PP_E\left(\frac{1}{\ti}\right)=&(-1)^{s+1}\ti^\gamma\sum_{\beta\in D_S}c_{E}(\beta)\ti^{-\beta}\\
=&\sum_{\beta\in D_S}(-1)^{s+1}c_{E}(\beta)\ti^{\gamma-\beta}\\
=&\sum_{\alpha\in D_S}(-1)^{s+1}c_{E}(\gamma-\alpha)\ti^\alpha.
\end{align*}
Therefore the claim is equivalent to
\[
c_{K-E}(\alpha)=(-1)^{s+1}c_{E}(\gamma-\alpha).
\]
If $\alpha\not\in\{\zeta\mid \mu^E\le\gamma-\zeta\le\gamma^E\}=\{\zeta\mid \gamma-\gamma^E\le\zeta\le\gamma-\mu^E\}$ then $c_{K-E}(\alpha)=c_E(\gamma-\alpha)=0$ by proof of Proposition \ref{11}. 
So we can assume $\gamma-\gamma^E\le\alpha\le\gamma-\mu^E$.

Now let $\alpha=\gamma-\beta$. 
Then $\mu^E\le\beta\le\gamma^E$.
As the equivalent conditions of Corollary \ref{26} are satisfied, for any $\delta$ such that $\mu^E\le\delta\le\gamma^E$ with $\delta+\ee_i
\le\gamma^E$, $\ol\Delta^E_i(\delta)\ne\emptyset$ if and only if $\Delta^{K-E}_i(\tau-\delta)=\emptyset$.
In particular, for any $\delta$ with $\mu^E\le\beta-\one\le\delta\le\beta\le\gamma^E$, $\ol\Delta^E_i(\delta)\ne\emptyset$ if and only if $\Delta^{K-E}_i(\tau-\delta)=\emptyset$.
Hence by Proposition \ref{08}, $\dist(E^{\beta-\one}\backslash E^{\beta})=\dist(D_S^{\beta-\one}\backslash D_S^{\beta})-\dist((K-E)^{\gamma-\beta}\backslash (K-E)^{\gamma-\beta+\one})$.
Now recalling that $\alpha=\gamma-\beta$ we have $\dist(E^{\gamma-\alpha-\one}\backslash E^{\gamma-\alpha})=\dist(D_S^{\gamma-\alpha-\one}\backslash D_S^{\gamma-\alpha})-\dist((K-E)^{\alpha}\backslash (K-E)^{\alpha+\one})$.
As $\dist(D_S^{\gamma-\alpha-\one}\backslash D_S^{\gamma-\alpha})=d_{D_S}(\gamma-\alpha-\one,\gamma-\alpha)=s$, this translates to
\[
\dist_{K-E}(\alpha)=s-\dist_E(\gamma-\alpha-\one),
\]
for any $\gamma-\gamma^E\le\alpha\le\gamma-\mu^E$ with $\alpha+\one\le\gamma-\mu^E$.
Then
\begin{align*}
c_{K-E}&(\alpha)= \sum_{J\subseteq \{1,\dots,s\}}(-1)^{|J^c|}\dist_{K-E}(\alpha-\ee_J)\\
=&(-1)^s\sum_{J\subseteq \{1,\dots,s\}}(-1)^{|J|}(s-\dist_E(\gamma{-}\alpha{-}\one{+}\ee_J))\\
=& (-1)^ss\sum_{\mathclap{J\subseteq \{1,\dots,s\}}}(-1)^{|J|}+(-1)^{s+1}\sum_{\mathclap{J\subseteq \{1,\dots,s\}}}(-1)^{|J|}\dist_E(\gamma{-}\alpha{-}\one{+}\ee_J)\\
=&(-1)^ss\sum_{i=0}^s(-1)^i\binom{s}{i}+(-1)^{s+1}\sum_{\mathclap{J\subseteq \{1,\dots,s\}}}(-1)^{s+|J^c|}\dist_E(\gamma{-}\alpha{-}\ee_{J^c}) \\
=&(-1)^s(1-1)^s+(-1)^{s+1}c_E(\gamma-\alpha) \\
=&(-1)^{s+1}c_E(\gamma-\alpha).
\end{align*}
Hence the claim.
\end{proof}

%%%%%%%%%%%%%%%%%%%%%%%%%%%%%%%%%%%%%%%%%%%%%%%%%%%%%%%%%%%%%%%%%%%%%%%

As a corollary, we obtain a generalization of Pol's result \cite[Proposition 5.2.28]{Pol16}.

\begin{cor}\label{27}
Let $R$ be an admissible ring, $\E$ a fractional ideal of $R$ and $\K$ a canonical ideal of $R$ such that $R\subseteq \K\subseteq \ol R$.
Set $E=\Gamma_\E$ and $K=\Gamma_\K$. 
Then:
\[
\PP_{K-E}(\ti)=(-1)^{s+1}\ti^\gamma \PP_E\left(\frac{1}{\ti}\right).
\]
\end{cor}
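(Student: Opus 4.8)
The plan is to read this as a direct application of Theorem~\ref{31} to the good semigroup $S=\Gamma_R$, the good semigroup ideal $E=\Gamma_\E$, and the canonical semigroup ideal $K$. All of the genuine work has already been done in the preceding sections, so the only content here is to check that the three ingredients required by that theorem are present in the ring-theoretic situation and that the $K$ appearing in the statement is the one the theorem expects.

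First I would fix the types of the objects. Since $R$ is admissible, the proposition following Definition~\ref{88} gives that $S=\Gamma_R$ is a good semigroup and that $E=\Gamma_\E\in\GSI_S$; by Lemma~\ref{14}.\eqref{14a} the dual $K-E$ again lies in $\GSI_S$. Hence both sides of the asserted identity are well defined, and indeed polynomials by Proposition~\ref{11}. Second, I would identify $K=\Gamma_\K$ with the normalized canonical ideal. This is exactly the extension of D'Anna's characterization to admissible rings established in \cite{DSG}: a fractional ideal $\K$ with $R\subseteq\K\subseteq\ol R$ is canonical if and only if $\Gamma_\K$ equals the normalized canonical semigroup ideal. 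The inclusions $R\subseteq\K\subseteq\ol R$ are precisely what force $\mu^{\Gamma_\K}=0=\mu^{S}$ and $\gamma^{\Gamma_\K}=\gamma$, so that $K=\Gamma_\K$ is \emph{the} canonical ideal of $S$ in the normalized sense used throughout the paper, and not merely a translate of it.

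Third, I would verify the hypothesis of Theorem~\ref{31}, namely that one of the equivalent conditions of Corollary~\ref{26} holds. This is the content of Remark~\ref{29}: for an admissible ring $R$ and a fractional ideal $\E$, the compatibility of Cohen--Macaulay duality with semigroup duality (Proposition~\ref{74}), combined with Remark~\ref{24}, forces condition Corollary~\ref{26}.\ref{26a}, that is $\dist(D_S^{\mu^E}\backslash E)=\dist((K-E)\backslash D_S^{\gamma-\mu^E})$. With this verified, Theorem~\ref{31} applies verbatim and delivers $\PP_{K-E}(\ti)=(-1)^{s+1}\ti^\gamma\PP_E(1/\ti)$, which is the claim.

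The step I expect to be the only delicate one is the second: the bookkeeping around normalization. The whole argument hinges on the $K$ used when invoking Remark~\ref{29} being the same $K$ used in Theorem~\ref{31} (both equal to the normalized canonical ideal), and this coincidence is guaranteed exactly by the hypothesis $R\subseteq\K\subseteq\ol R$. Once that identification is secured, the remaining steps are purely a matter of citing the already-proved statements, with no further computation required.
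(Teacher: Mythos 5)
Your proof is correct and takes essentially the same route as the paper's, whose entire proof is ``It follows immediately from Remarks \ref{29} and \ref{32}, and Theorem \ref{31}'': you likewise verify the hypothesis of Theorem \ref{31} via Remark \ref{29} (i.e.\ condition (i) of Corollary \ref{26}) and then apply the theorem. The one point you develop more explicitly than the paper --- that $R\subseteq\K\subseteq\ol R$ forces $\Gamma_\K$ to be the \emph{normalized} canonical ideal $K_0$, so that the $K$ of the statement is the $K$ of Theorem \ref{31} --- is exactly the bookkeeping the paper leaves implicit, while the paper's additional citation of Remark \ref{32} (identifying $\PP_{\Gamma_\E}$ with $\PP_\E$) is not actually needed for the statement as phrased at the semigroup level.
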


\begin{proof}
It follows immediately from Remarks \ref{29} and \ref{32}, and Theorem \ref{31}.
\end{proof}

%%%%%%%%%%%%%%%%%%%%%%%%%%%%%%%%%%%%%%%%%%%%%%%%%%%%%%%%%%%%%%%%%%%%%%%

\begin{rmk}
%Proposition \ref{28} and 
Remark \ref{29} shows that the equivalent conditions of Corollary \ref{26} are true in %the 2-dimensional case and in 
the value semigroup case.
It remains the question whether they are always satisfied.
If not, they could represent a step forward in characterizing the class of value semigroups inside the bigger class of good semigroups.
\end{rmk}

%%%%%%%%%%%%%%%%%%%%%%%%%%%%%%%%%%%%%%%%%%%%%%%%%%%%%%%%%%%%%%%%%%%%%%%%%%%%%%%
\bibliographystyle{amsalpha}
\bibliography{Poincare_series}

\providecommand{\bysame}{\leavevmode\hbox to3em{\hrulefill}\thinspace}
\providecommand{\MR}{\relax\ifhmode\unskip\space\fi MR }
% \MRhref is called by the amsart/book/proc definition of \MR.
\providecommand{\MRhref}[2]{%
  \href{http://www.ams.org/mathscinet-getitem?mr=#1}{#2}
}
\providecommand{\href}[2]{#2}
\begin{thebibliography}{MFTT17}

\bibitem[BDF00]{BDF00}
V.~Barucci, M.~D'Anna, and R.~Fr\"oberg, \emph{Analytically unramified
  one-dimensional semilocal rings and their value semigroups}, J. Pure Appl.
  Algebra \textbf{147} (2000), no.~3, 215--254. \MR{1747441}

\bibitem[CDGZ03]{CDG03}
A.~Campillo, F.~Delgado, and S.~M. Gusein-Zade, \emph{The {A}lexander
  polynomial of a plane curve singularity via the ring of functions on it},
  Duke Math. J. \textbf{117} (2003), no.~1, 125--156. \MR{1962784}

\bibitem[CDK94]{CDK94}
A.~Campillo, F.~Delgado, and K.~Kiyek, \emph{Gorenstein property and symmetry
  for one-dimensional local {C}ohen-{M}acaulay rings}, Manuscripta Math.
  \textbf{83} (1994), no.~3-4, 405--423. \MR{1277539}

\bibitem[D'A97]{D'A97}
Marco D'Anna, \emph{The canonical module of a one-dimensional reduced local
  ring}, Comm. Algebra \textbf{25} (1997), no.~9, 2939--2965. \MR{1458740}

\bibitem[DdlM87]{Del87}
F.~Delgado de~la Mata, \emph{The semigroup of values of a curve singularity
  with several branches}, Manuscripta Math. \textbf{59} (1987), no.~3,
  347--374. \MR{909850}

\bibitem[DdlM88]{Del88}
\bysame, \emph{Gorenstein curves and symmetry of the semigroup of values},
  Manuscripta Math. \textbf{61} (1988), no.~3, 285--296. \MR{949819}

\bibitem[KST17]{DSG}
P.~Korell, M.~Schulze, and L.~Tozzo, \emph{Duality on value semigroups}, J.
  Commut. Algebr. (2017), to appear.

\bibitem[Kun70]{Kun70}
E.~Kunz, \emph{The value-semigroup of a one-dimensional {G}orenstein ring},
  Proc. Amer. Math. Soc. \textbf{25} (1970), 748--751. \MR{0265353 (42 \#263)}

\bibitem[KV04]{KV04}
K.~Kiyek and J.~L. Vicente, \emph{Resolution of curve and surface
  singularities}, Algebras and Applications, vol.~4, Kluwer Academic
  Publishers, Dordrecht, 2004, In characteristic zero. \MR{2106959
  (2005k:14028)}

\bibitem[MF15]{MF15}
J.~J. Moyano-Fern\'andez, \emph{Poincar\'e series for curve singularities and
  its behaviour under projections}, J. Pure Appl. Algebra \textbf{219} (2015),
  no.~6, 2449--2462. \MR{3299740}

\bibitem[MFTT17]{MFTT17}
J.J. Moyano-Fern\'andez, W.~Ten\'orio, and F.~Torres, \emph{Generalized
  weierstrass semigroups and their poincar\'e series},
  \url{https://arxiv.org/pdf/1706.03733.pdf}, 2017.

\bibitem[Pol16]{Pol16}
D.~Pol, \emph{Singularit\'es libres, formes et r\'esidus logarithmiques}, 2016,
  p.~168.

\bibitem[Sta77]{Sta77}
R.~P. Stanley, \emph{Cohen-{M}acaulay complexes}, Reidel, Dordrecht, 1977.
  \MR{0572989}

\bibitem[Wal72]{Wal72}
R.~Waldi, \emph{Wertehalbgruppe und {S}ingularit\"at einer ebenen algebroiden
  {K}urve}, 1972.

\bibitem[Wal00]{Wal00}
\bysame, \emph{On the equivalence of plane curve singularities}, Comm. Algebra
  \textbf{28} (2000), no.~9, 4389--4401. \MR{1772512}

\end{thebibliography}
%%%%%%%%%%%%%%%%%%%%%%%%%%%%%%%%%%%%%%%%%%%%%%%%%%%%%%%%%%%%%%%%%%%%%%%%%%%%%%%

\end{document}